\documentclass[10pt]{amsart}

\usepackage{amssymb}
\usepackage{amsmath} 
\usepackage[T1]{fontenc}
\usepackage{textcomp}
\usepackage[bookmarks=false,pdfstartview={FitBH},colorlinks={true},
pdfview={FitBH}]{hyperref}
\usepackage{orcidlink}
\usepackage{newpxtext,newpxmath}
\usepackage{enumitem}

\newtheorem{theorem}{Theorem}

\newtheorem{claim}[theorem]{Claim}

\theoremstyle{definition}
\newtheorem{definition}[theorem]{Definition}

\theoremstyle{remark}

\title{Ramsey Theory and Bounding in Arithmetic} 

\author[Peter Cholak]{Peter Cholak}
\address{University of Notre Dame}  
\email{Peter.Cholak.1@nd.edu}

\thanks{ORCID: \href{https://orcid.org/0000-0002-6547-5408}{\orcidlink{0000-0002-6547-5408} 0000-0002-6547-5408}. \\ \indent \textbf{Acknowledgements:} We dedicate this paper to Jeff Hirst on the occasion of his retirement. May he have many more happy years! While this author has been dealing with c.e.\ sets since his 1991 thesis \cite{MR2686153}, and $\mathrm{B}\Sigma^0_2$ since the work of Cholak, Jockusch, and Slaman \cite{CJS-2001} started in early 1997, this paper started as a talk for Hirst's retirement conference held in Vienna in August 2025 and grew from there. The author gave a talk on a similar topic in the summer of 2012 at the Midlands Logic Seminar, University of Birmingham. The author thanks Gavin Dooley, Damir Dzhafarov, Denis Hirschfeldt, Jeff Hirst, Richard Kaye, Leszek Ko{\l}odziejczyk, Ulrich Kohlenbach, Larry Moss, Ludovic Patey, Arno Pauly, and Keita Yokoyama for many useful conversations over the years related to this material. \\ \indent \textbf{Funding:} The author was partially supported by NSF-DMS-2502292 and the Erwin Schr{\"o}dinger International Institute for Mathematics and Physics.}

\subjclass{03B25, 03F30, 03F35}
\keywords{Ramsey's Theorem, Infinite Pigeonhole Principles, Bounding, and Weihrauch reductions}
\begin{document}

\maketitle

\begin{abstract}

	We investigate the relationship between various versions of Ramsey's theorem and bounding schemes in a model $\mathcal{N}$ of a fragment of arithmetic $F$. Our primary objective is to recast and extend the seminal results of Hirst \cite{Hirst-1987} (see Theorem~\ref{hirst}) through the modern lens of Weihrauch reducibility. By extracting explicit Weihrauch reductions from classical proofs, we expose the uniform computational content underlying these combinatorial principles, yielding a deeper and more refined understanding of their reverse mathematical strength. 

	
	Our results, informally stated in our terminology and established inside $\mathcal{N}$, are as follows: The following are equivalent: $\mathrm{B}\Sigma^0_2$, the statement that the finite union of finite c.e.\ sets is finite, and the Infinite Pigeonhole Principle (see Theorem~\ref{equivBS2}). We also discuss the Weihrauch relations between these logically equivalent principles (see Section~\ref{swr}). The Infinite Pigeonhole Principle is Weihrauch reducible to $\mathrm{RT}^2_2$ (see Theorem~\ref{wr1}). There is also another principle logically equivalent to $\mathrm{B}\Sigma^0_2$ which is Weihrauch reducible to $\mathrm{SRT}^2_2$ (see Theorem~\ref{srt22}). We show that there is a principle which is equivalent to $\mathrm{B}\Sigma^0_3$ (see Theorem~\ref{BS3}) and Weihrauch reducible to $\mathrm{SRT}^2_{<\infty}$ (Theorem~\ref{srt2i}). We discuss some equivalences with $\mathrm{B}\Sigma^0_{n}$ (see Subsection~\ref{BSN}) and end with a problem Weihrauch reducible to $\mathrm{RT}^{n+1}_{2}$ (Subsection~\ref{RTN}). 
	
		
	Since we work within the model $\mathcal{N}$, many standard definitions must be adjusted. Due to the expository nature of this paper, these definitions are introduced throughout the text as needed. Reading the paper from start to finish will provide a better understanding of the ideas involved than focusing only on individual theorems.

\end{abstract}

\section{Introduction}

\subsection{The setup}

Our intended domain of discourse is a second-order model of arithmetic, $\mathcal{N}= (N,\mathcal{P},+,\times,<,0,1)$, where $\mathcal{P} \subseteq 2^N$ (a family of subsets of $N$). Our logical symbols are those of arithmetic and equality. We use a two-sorted logic with integers $x \in N$, integer variables $v$ which range over integers, sets $X\in \mathcal{P}$, set variables $Y$ which range over sets, and the membership relation $x \in X$. Our model need not be standard (i.e., $N$ may not be $\omega$), and $\mathcal{P}$ is most likely a proper subset of $2^N$. Elements of $N$ will be called integers, and elements $X\in \mathcal{P}$ will be called sets. 

We say that a formula is $\Sigma^0_0$, $\Pi^0_0$, or $\Delta^0_0$ if all of its quantifiers range over integer variables and are bounded (i.e., $\forall v < t$ or $\exists v < t$). Number and set parameters are allowed in our formulas; for simplicity, we suppress these parameters in our notation hereafter. We also establish the convention that $\theta$ always denotes a $\Delta^0_0$ formula. A formula is $\Sigma^0_{n+1}$ if and only if (iff) it consists of a block of existential quantifiers followed by a formula in $\Pi^0_n$, and a formula is $\Pi^0_{n+1}$ iff it consists of a block of universal quantifiers followed by a formula in $\Sigma^0_n$. A $\Delta^0_{n+1}$ formula is one equivalent in $\mathcal{N}$ to both a $\Sigma^0_{n+1}$ and a $\Pi^0_{n+1}$ formula. At this point, we make no claims about what formulas might or might not be equivalent in $\mathcal{N}$ to $\Delta^0_0$, $\Sigma^0_n$, or $\Pi^0_n$ formulas. An arithmetic formula is one which is $\Sigma^0_{n}$ or $\Pi^0_{n}$ for some $n$. 

Induction for a property or set $P \subseteq \mathbb{N}$ is the statement that if $P(0)$ holds, and $P(n)$ implies $P(n+1)$ for all $n$, then $P(n)$ holds for all $n$. If $\mathcal{N}$ is standard, then induction holds for all possible $P$. If $\Gamma$ is a collection of subsets of $\mathbb{N}$, we write $\mathrm{I}\Gamma$ to denote the scheme stating that induction holds for all sets in $\Gamma$. Typical choices for $\Gamma$ are the classes of $\Delta^0_0$, $\Sigma^0_n$, or $\Pi^0_n$ definable sets. For example, $\mathrm{I}\Delta^0_0$ is the induction scheme for the sets $\{ x \in \mathbb{N} \mid \mathcal{N} \models \theta(x)\}$, where $\theta$ is a $\Delta^0_0$ formula (where again, set parameters are allowed in $\theta$). Arithmetic induction is induction for all arithmetically definable sets. 

We need $\mathcal{N}$ to realize some fragment $F$ of true second-order arithmetic, i.e., the theory of the structure $(\omega, 2^{\omega}, +, \times, <, 0, 1)$. $F$ must include $P^-$, Peano Arithmetic without induction, and $\mathrm{I}\Delta^0_0$. A set $X$ in $\mathcal{N}$ is finite iff there is an integer $n \in \mathbb{N}$ such that for all $x \in X$, $x \leq n$. In Section~\ref{ct}, we need to code finite objects as integers via some sort of G\"{o}del coding. For this purpose, we require exponentiation to be total. The details of showing that exponentiation is definable in $P^- + \mathrm{I}\Delta^0_0$ are nontrivial, and we encourage the interested reader to explore Section~3 of Chapter~V in H\'{a}jek and Pudl\'{a}k \cite{Hajek.Pudlak:93}. However, $P^- + \mathrm{I}\Delta^0_0$ is not enough to show that exponentiation is total (i.e., that for all $x$ and $y$, there is a $z$ such that $x^y=z$). $F$ must include at least the assertion that exponentiation is total, or something stronger like $\mathrm{I}\Sigma^0_1$ ($\mathrm{I}\Sigma^0_1$ can show that exponentiation is total). 

Informally, $\mathcal{P}$ should be closed under Turing reducibility. We formalize this via $\Delta^0_1$-Comprehension ($\Delta^0_1\text{-CA}$). $\Delta^0_1\text{-CA}$ states that if a formula $\varphi(x)$ is equivalent in $\mathcal{N}$ to both a $\Sigma^0_1$ formula and a $\Pi^0_1$ formula, then $\{ x \in \mathbb{N} \mid \mathcal{N} \models \varphi(x) \} \in \mathcal{P}$. Thus, $F$ must include $\Delta^0_1\text{-CA}$, implying that at a minimum, $\mathcal{P}$ must contain the computable sets. We should point out that throughout this paper, we discuss definable sets (i.e., sets of the form $\{ x \in \mathbb{N} \mid \mathcal{N} \models \varphi(x) \}$). However, unless $\varphi$ is equivalent to a $\Delta^0_1$ formula, these sets need not reside in $\mathcal{P}$. Note that there can be set parameters in $\varphi$ drawn from $\mathcal{P}$. A definable partial function $f$ is one whose graph $G = \{ (x,y) \mid y = f(x) \}$ is a definable relation. We also assume if $(x,y) \in G$ and $(x,z) \in G$, then $y=z$. Hereafter, $\mathcal{N}$ must be a model of $F$; this theory $F$ is also called $\mathrm{RCA}_0^*$. 

Our reference for all things related to first-order arithmetic is Kaye \cite{Kaye-1991}, though another excellent choice is H\'ajek and Pudl\'ak \cite{Hajek.Pudlak:93}. For second-order arithmetic, we cite Simpson \cite{Simpson-2009} or Dzhafarov and Mummert \cite{MR4472209}. 

\subsection{Bounding}

The bounding scheme for a property $P$ states that the following statement $\Theta(n)$ holds for all $n$:
\begin{equation*}\begin{split}
	\forall i < n \exists & b_i \bigg[  {P}(i,b_i) \bigg]\implies \\ \exists b \forall i < n \exists  &b_i < b \bigg[ {P}(i,b_i) \bigg]
\end{split}
\end{equation*}
Clearly, $\Theta(0)$ holds trivially, and if $\Theta(n)$ holds with witness $b$, then $\max \{ b, b_n \}$ witnesses that $\Theta(n+1)$ holds. Thus, a sufficient level of induction implies bounding. It is instructive to state bounding explicitly for the $\Sigma^0_2$ formula $\exists a \forall c \theta(i,b_i,a,c)$: the following holds for all $n$:
\begin{equation}\label{BS2}\begin{split}
	\forall i < n \exists & b_i \bigg[  \exists a \forall c \theta(i,b_i,a,c) \bigg]\implies \\ \exists b \forall i < n \exists  &b_i < b \bigg[\exists a \forall c \theta(i,b_i,a,c) \bigg]
\end{split}
\end{equation}
The bounding scheme $\mathrm{B}\Sigma^0_n$ is the bounding scheme restricted to $\Sigma^0_n$ formulas. For any $n$, $\mathrm{B}\Sigma^0_n$ can be used to show that every formula is equivalent in $\mathcal{N}$ to one in Prenex Normal Form, where the bounded quantifiers are in the interior of the formula. This fact will be needed twice, once each in the proof of Theorem~\ref{useBn} and Lemma~\ref{lastlemma}. 
	
Kirby and Paris \cite{Kirby.Paris:77} showed that for all $n$, in $\mathcal{N}$, $\mathrm{I}\Sigma^0_{n+1}$ strictly implies $\mathrm{B}\Sigma^0_{n}$, which in turn strictly implies $\mathrm{I}\Sigma^0_{n}$. $\mathrm{B}\Sigma^0_n$, for $n\geq 2$, is equivalent to a $\Pi^0_{n+2}$ sentence (see \cite{Hajek.Pudlak:93}). 
  
\subsection{Ramsey's Theorem}

The \emph{Infinite Pigeonhole Principle} states that if one has infinitely many pigeons and computably puts them into finitely many pigeonholes, at least one pigeonhole will have infinitely many pigeons. Formally, the Infinite $\Gamma$-Pigeonhole Principle asserts that if $C$ is a $\Gamma$-definable (in $\mathcal{N}$) partial coloring  ($C$ is a function whose graph is $\Gamma$-definable) from $\mathbb{N}$ to some integer $k$ and $X$ is an infinite subset of the domain of $C$, then there is an $x<k$ where $C^{-1}(x) \cap X$ is an infinite set in $\mathcal{N}$. We will consider the Infinite Pigeonhole Principle as the Infinite $\Sigma^0_1$-Pigeonhole Principle. We denote this principle by $\mathrm{RT}^1_{< \infty}$. Readers might expect us to consider the Infinite $\Delta^0_1$-Pigeonhole Principle, but we shall see that a solution to the Infinite $\Sigma^0_1$-Pigeonhole Principle implies one for the Infinite $\Delta^0_1$-Pigeonhole Principle. This will be discussed further in Sections~\ref{ct} and \ref{wr}. The Infinite Pigeonhole Principle is conceptually distinct from the finite Pigeonhole Principle, which states that if $m>n$ are integers and $f$ is a function from $m$ to $n$, then there is an $x<n$ where $|f^{-1}(x)| \geq 2$. 

Let $[X]^n = \{ Y \subset X \mid |Y| = n \}$, where $X$ is $\mathcal{N}$-infinite. $\mathrm{RT}^n_k$ states that for every $k$-coloring $C$ of $[X]^n$, there is a homogeneous set $H$ such that $C$ is constant on $[H]^n$. $\mathrm{RT}^n_{<\infty}$ is $\mathrm{RT}^n_k$ for all $k$. A $k$-coloring $C(x,s)$ is \emph{stable} if $\lim_s C(x,s)$ exists. $\mathrm{SRT}^2_k$ is $\mathrm{RT}^2_k$ restricted to stable colorings. 

\subsection{Hirst's Theorems}

The following results of Hirst are our main interest. These results were originally established in his 1987 doctoral dissertation \cite{Hirst-1987}. 

\begin{theorem}[Hirst \cite{Hirst-1987}]\label{hirst} Over $\mathrm{RCA}_0$ ($P^- + \mathrm{I}\Sigma^0_1 + \Delta^0_1\text{-CA}$):
\begin{enumerate}[label=(\alph*)]
	\item $\mathrm{RT}^1_{<\infty}$ is equivalent to $\mathrm{B}\Sigma^0_2$. 
 	\item $\mathrm{RT}^2_2$ implies $\mathrm{B}\Sigma^0_2$.
 	\item $\mathrm{RT}^2_{<\infty}$ implies $\mathrm{B}\Sigma^0_3$.
\end{enumerate}
\end{theorem}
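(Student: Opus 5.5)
For (a), in the direction $\mathrm{B}\Sigma^0_2 \Rightarrow \mathrm{RT}^1_{<\infty}$, we argue by contradiction. Let $C:X\to k$ be a coloring and suppose each $C^{-1}(x)\cap X$ is finite. Then for every $x<k$ there is a bound $b_x$ with $\forall y>b_x\,(y\in X\to C(y)\neq x)$. Since $C$ is $\Sigma^0_1$-definable and partial with $X\subseteq\operatorname{dom}C$, the statement ``$C(y)\neq x$'' is equivalent to $\exists z\neq x\,(C(y)=z)$, which is $\Sigma^0_1$. The displayed property is therefore $\Pi^0_2$, hence $\Sigma^0_2$ after a dummy quantifier. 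Applying $\mathrm{B}\Sigma^0_2$ in the form \eqref{BS2} gives a single $b$ bounding all the $b_x$. Because $X$ is infinite, pick $y\in X$ with $y>b$. Then $C(y)<k$ must equal some $x$, contradicting $y>b_x$.

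For the converse $\mathrm{RT}^1_{<\infty}\Rightarrow\mathrm{B}\Sigma^0_2$, we first use $\mathrm{I}\Sigma^0_1$ to reduce $\mathrm{B}\Sigma^0_2$ to $\mathrm{B}\Pi^0_1$. So assume $\forall i<n\,\exists b\,\forall c\,\theta(i,b,c)$ but that no uniform bound exists. We define a coloring on $\mathbb{N}$: $C(s)$ is the least $i<n$ such that for every $b<s$ there is a $c<s$ with $\neg\theta(i,b,c)$. Such an $i$ exists at every stage, because failure of bounding at $s$ is a $\Sigma^0_1$ fact that becomes visible, and we may thin to the $\Delta^0_1$ set of stages where the search succeeds (that this set is infinite follows from failure of bounding together with $\mathrm{I}\Sigma^0_1$). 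This $C$ is $\Delta^0_1$, so it lies in $\mathcal{P}$. By $\mathrm{RT}^1_{<\infty}$, some color $i$ occurs infinitely often. Now take the true witness $b_i$: for all $s>b_i$, $i$ cannot be bad at stage $s$ via $b_i$. This is the main obstacle, since the coloring must be arranged so that color $i$ appearing infinitely often forces every $b$ to be refuted. The fix is to color $s$ by the least $i$ whose current least unrefuted candidate $b$ (searched up to $s$) changed at $s$, using a movable-marker construction. An infinite homogeneous color $i$ then means $i$'s candidate moves infinitely often, so no $b$ works for $i$, which is a contradiction. Failure of bounding guarantees that some candidate moves at each of infinitely many stages, and $\mathrm{I}\Sigma^0_1$ controls the finite bookkeeping.

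For (b) and (c), we reduce to Ramsey for pairs. For (b), given $C:\mathbb{N}\to k$ with $X$ infinite, define the two-coloring $D(x,y)=1$ iff $C(x)=C(y)$. A homogeneous set of color 1 is a single color class, which is infinite. A homogeneous set of color 0 takes pairwise distinct colors, so by the finite pigeonhole principle it has at most $k$ elements, contradicting infiniteness. Hence $\mathrm{RT}^1_{<\infty}$ follows, and by (a) so does $\mathrm{B}\Sigma^0_2$. Here $\mathrm{RT}^2_2$ must be applied uniformly in $k$, which is legitimate since $D$ has only two colors.

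For (c), we mimic (a) one level up. Suppose $\mathrm{B}\Pi^0_2$ fails for $\forall i<n\,\exists b\,\forall c\,\exists d\,\theta$. Define an $n$-coloring of pairs, $E(s,t)$ = the least $i$ whose $\Pi^0_1$-approximated candidate witness changes between $s$ and $t$, so that $\lim_t E(s,t)$ encodes the $\Sigma^0_2$ failure stage by stage. Applying $\mathrm{RT}^2_{<\infty}$ yields an infinite homogeneous set of color $i$. Along it, the $\Delta^0_2$ guess for $i$ is refuted infinitely often, so no $b$ witnesses row $i$, which is the contradiction. The delicate step is verifying limit behaviour using only $\mathrm{B}\Sigma^0_2$, which is already available from (b).
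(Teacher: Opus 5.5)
Your part (c) has a genuine gap. The coloring is never actually specified, and the step ``the guess for $i$ is refuted infinitely often along $H$, so no $b$ witnesses row $i$'' is exactly what breaks one level up.

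\begin{itemize}
\item If the candidate for row $i$ is computed from stage-$t$ approximations of the $\Pi^0_2$ property $\forall c\,\exists d\,\theta$, it need not converge even when row $i$ has a true witness. Infinitely many changes along $H$ then refute nothing.
\item If instead you mean the $\emptyset'$-candidate, so that $\lim_t E(s,t)$ is ``the least $i$ whose $\emptyset'$-candidate moves at $s$,'' then at stages where no candidate moves there is no such $i$. Your $n$-coloring is undefined there. With a default color, the homogeneous set you get may simply have that color and carry no information.
\end{itemize}

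In (a) you sidestepped this by restricting to the computable infinite set of move stages. The set of $\emptyset'$-move stages is only $\Delta^0_2$, so that restriction is unavailable.

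What is needed is a total stable coloring in which every point has a genuine limit color. The paper gets this in two steps:
\begin{itemize}
\item Theorem~\ref{BS3} proves that $\mathrm{B}\Sigma^0_3$ is equivalent to ``finitely many cofinite c.e.\ sets have cofinite intersection.'' This uses $\Sigma^0_3$-completeness of $\mathrm{COF}$, with $W_{f(i)}$ cofinite iff row $i$ has a witness.
\item Theorem~\ref{srt2i} colors $(x,s)$ by the least $e$ such that the least $y\in[x,s]$ outside $\bigcap_{e'<n}W_{f(e'),s}$ is missing from $W_{f(e),s}$.
\end{itemize}
Failure of bounding makes the intersection co-infinite, so every $x$ has a limit color $e^x<n$, never the default, together with a non-member $y^x\geq x$ of $W_{f(e^x)}$. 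The coloring is stable, so even $\mathrm{SRT}^2_{<\infty}$ suffices. A homogeneous set of color $e$ yields unboundedly many non-members of $W_{f(e)}$, i.e., row $e$ has no witness.

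Your direct approach can be repaired in the same spirit. Let $m(s,t)\leq s$ be the largest bound such that some row $i$ has, for every $b<m(s,t)$, some $c<s$ with $\forall d<t\,\neg\theta(i,b,c,d)$. Color $(s,t)$ by the least such $i$. Then $\mathrm{B}\Sigma^0_2$, available from (b), makes $\lim_t m(s,t)$ unbounded in $s$, and every point has a genuine limit color.

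Parts (a) and (b) are essentially the paper's argument. Your movable-marker coloring is a concrete instance of the $\mathrm{FIN}$-coding of Equation~\eqref{FU}, composed with the coloring ``least $i$ whose set $x$ enters first.'' In effect you folded the intermediate principle ``a finite union of finite c.e.\ sets is finite'' from Theorem~\ref{equivBS2} into one step. Part (b) is Theorem~\ref{wr1}.

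Two slips in (a) should be fixed.
\begin{itemize}
\item A $\Pi^0_2$ property is not $\Sigma^0_2$ after adding a dummy quantifier. The right observation is that $C(y)\neq x$ is the negation of the $\Sigma^0_1$ formula $C(y)=x$. So your bounding property is $\Pi^0_1$, and $\mathrm{B}\Pi^0_1$ applies.
\item You discard your first coloring for the wrong reason. If a color occurred infinitely often there, it would already refute every $b$. Its real defect is that the set of stages where it is defined can be finite, since refutations of $b$ can be delayed past $2^b$. So your parenthetical claim that this set is infinite is false.
\end{itemize}
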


These results have had a lasting impact on reverse mathematics and are highly deserving of reflection. Our goal is to reprove and recast these results using Weihrauch reducibilities. We will see that there are still some interesting unexplored issues and subtleties about these results. While we expose some of these nuances, others remain open for future study. First, some basic computability theory inside $\mathcal{N}$ and a brief discussion of Weihrauch reductions are needed. 

\subsection{Computability Theory}\label{ct}

We are going to work inside $\mathcal{N}$. Kleene's $T$-predicate, $T(e,x,s)$, holds iff $s$ is the computation history of a halting run of the $e$th Turing machine with input $x$. Using course-of-values recursion, $s$ codes the sequence of configurations of the $e$th Turing machine with input $x$ for each stage in the run. This relies on the fact that exponentiation is total and G\"odel coding is available (for the reader wishing to learn more about course-of-values recursion and G\"odel coding in our fragment $F$, we direct them to H\'ajek and Pudl\'ak \cite{Hajek.Pudlak:93}). Kleene \cite{MR1513071} showed that the $T$-predicate, $T(e,x,s)$, is a primitive recursive relation. By Bennett \cite{MR2613641} (see Chapter I, Section 3(c) of \cite{Hajek.Pudlak:93}), this relation is $\Delta^0_0$-definable in $\mathcal{N}$. If we have $\mathcal{N} \models \mathrm{I}\Sigma^0_1$, then in $\mathcal{N}$ all the primitive recursive functions are total, but that is not necessary here.  

$K = \{ e \mid \exists s T(e,e,s) \}$ is a $\Sigma^0_1$ set. It is also $\Sigma^0_1$-complete, i.e., for all $\Sigma^0_1$ formulas $\chi(x)$, there is a computable total one-to-one function $f$ such that $\chi(x)$ iff $f(x) \in K$. Let us denote $x \in W_{e,s}$ iff $\exists t< s T(e,x,t)$. This is equivalent to the standard definition of a c.e.\ set. Hence ``$x\in W_{e,s}$'' is equivalent to a $\Delta^0_0$ formula $\varphi(x,s)$. Every $\Delta^0_0$ formula $\theta(x,s)$ is equivalent to ``$x\in W_{e,s}$'' for some $e$. The c.e.\ sets are the $\Sigma^0_1$-definable sets in $\mathcal{N}$. For a set to be computable, both it and its complement need to be $\Sigma^0_1$-definable sets. So this set is $\Delta^0_1$. We will frequently equate computable objects with $\Delta^0_1$-definable objects. We consider a Turing functional $\Theta(X)$ as a $\Delta^0_1$-definable formula with a free variable $X$. 
	
One theorem we will need several times is that every infinite c.e.\ set uniformly contains an infinite computable subset. For example, consider a $\Sigma^0_1$-definable (in $\mathcal{N}$) partial coloring $C$ from $\mathbb{N}$ to some integer $n$. The domain of $C$ is a c.e.\ set; if its domain contains an $\mathcal{N}$-infinite subset, then it contains a computable $\mathcal{N}$-infinite subset $X$. We can always assume we are working with this computable subset of the domain when the domain is infinite. This subset can be obtained uniformly from the coloring. 
	
The index set $\mathrm{FIN} = \{ e \mid W_e \text{ is finite} \} = \{ e \mid \exists b \forall s > b (W_{e,b} = W_{e,s})\}$ is $\Sigma^0_2$-complete. For all $\theta$, there is a computable total one-to-one function $f$ such that:
\begin{equation}\label{FU}\begin{split}
	\exists  b_i  \exists a \forall c & \theta(i,b_i,a,c) \text{ iff} \\ 
		f(i) &\in \mathrm{FIN} \text{ iff}\\
		\exists b_i \forall s>b_i &(W_{f(i),b_i} = W_{f(i),s}).
\end{split}\end{equation}
$\mathrm{COF} = \{ e \mid W_e \text{ is cofinite} \} = \{ e \mid \exists k \forall x> k \exists s (x \in W_{e,s})\}$ is $\Sigma^0_3$-complete. Unfortunately, there are no similar ``nice'' index sets available for $\Sigma_n$ when $n>3$. 

A good computability reference is Soare \cite{Soare:87}, but there are plenty of newer references available. For working carefully in $\mathcal{N}$, a good start is Simpson \cite{Simpson-2009}. 

\subsection{Problems}

We will consider the $\mathcal{N}$-arithmetical formula $\mathcal{A}(I,S)$ asserting that $S$ is a solution of some instance $I$ of some fixed solvable problem $\mathcal{A}$. There are no parameters in $\mathcal{A}$ other than the free variables $I$ and $S$. A primary example is ${\mathrm{RT}}^2_2(C,H)$, which says $C$ is a $2$-coloring of all pairs in some $\mathcal{N}$-infinite set $X \subseteq \mathbb{N}$ and $H \subseteq X$ is a homogeneous $\mathcal{N}$-infinite set. So $C$ is the instance and $H$ is the solution. In what follows, we recast some of our relevant arithmetic statements as problems. We will have to be careful to identify the instances and solutions of our problems. 

Given a problem $\mathcal{A}(I,S)$, we say $\mathcal{N}$ \emph{realizes} $\mathcal{A}$ iff $\mathcal{N}$ realizes that for all $I$ there is an $S$ such that $\mathcal{A}(I,S)$. We assume that if $I$ is not an instance of the problem $\mathcal{A}$ (say, not a coloring, etc.), then $S = \emptyset$. This is a reasonable assumption since determining whether $I$ is an instance of any of our problems is always arithmetic.

\subsection{Weihrauch Reducibility over $\mathcal{N}$}\label{wr}
	
\begin{definition}\label{equiv}
   We say $\mathcal{A}_0(I_0,S_0)$ is \emph{(strongly) Weihrauch reducible over $\mathcal{N}$} to $\mathcal{A}_1(I_1,S_1)$, written $\mathcal{A}_0 \leq_W \mathcal{A}_1$ or $\mathcal{A}_0 \leq_{sW} \mathcal{A}_1$ iff there are two Turing functionals $\Phi$ and $\Psi$ such that:
     \begin{itemize}
         \item $\mathcal{N}$ realizes $\forall I_0 \forall S_1 ( \mathcal{A}_1(\Phi(I_0),S_1)) \Rightarrow \mathcal{A}_0(I_0,\Psi(I_0, S_1)).$
         \item $\mathcal{N}$ realizes $\mathcal{A}_1$. 
         \item If the backward functional depends only on $S_1$ (i.e., we can write $\Psi(S_1)$ instead of $\Psi(I_0, S_1)$), then the reduction is strong. 
         \item Both $\Phi(I_0)$ and $\Psi(I_0,S_1)$ are $\Delta^0_1$-definable partial functions which are total on all reasonable oracles (sets), $I_0$ and $S_1$, from $\mathcal{P}$.
     \end{itemize}
\end{definition}

A Weihrauch reduction is the gold standard in showing one problem uniformly computes (or implies) another. If $\mathcal{N}$ is the standard model, this definition is the standard definition. This reduction, though not explicitly named, was in use in this fashion by Hirst and others long before it was named. For example, the reduction used to show that $\mathrm{SRT}^2_2$ follows from $\mathrm{RT}^2_2$ is a strong Weihrauch reduction with the needed functionals being the identity: $\mathrm{SRT}^2_2 \leq_{sW} \mathrm{RT}^2_2$. Other examples will follow. 

We are going to use Weihrauch reducibility as a replacement for a proof from $F$. If $\mathcal{A}_0 \leq_W \mathcal{A}_1$ in $\mathcal{N}$, then $\mathcal{N}$ realizes $\mathcal{A}_0$. This implies that $F$, our fragment of arithmetic, proves that $\mathcal{A}_1$ implies $\mathcal{A}_0$. If $\mathcal{N}$ does not realize $\mathcal{A}_1$, the Weihrauch reduction (and resulting proof) might be meaningless. Hence, the second clause in our above definition is necessary.  

There is similar work in Reitzes \cite{MR4494715} and Dzhafarov, Hirschfeldt, and Reitzes \cite{MR4520553}, where a proof of $\mathcal{A}_1 \Rightarrow \mathcal{A}_0$ in $\mathrm{RCA}_0$ is used to provide a winning strategy for a certain related game. There is one important difference between our definition and the ones considered in \cite{MR4494715} and \cite{MR4520553} in that we assume $\mathcal{N}$ realizes $\mathcal{A}_1$. We can weaken that statement to $\forall I_0 \exists S_1 ( \mathcal{A}_1(\Phi(I_0),S_1))$ if necessary. Concretely, in \cite{MR4520553}, they call the above reduction $\leq_W^{\mathrm{RCA}_0^*}$ and discuss $\mathrm{B}\Sigma^0_2$ in Sections 6 and 7.  

The Infinite $\Delta^0_1$-Pigeonhole Principle is strongly Weihrauch reducible to the Infinite $\Sigma^0_1$-Pigeonhole Principle via the identity functionals. Every $\Sigma^0_1$ partial coloring has a $\Sigma^0_1$ domain. Every infinite $\Sigma^0_1$ set has an infinite $\Delta^0_1$ set $X$. Hence, the Infinite $\Sigma^0_1$-Pigeonhole Principle is strongly Weihrauch reducible to the Infinite $\Delta^0_1$-Pigeonhole Principle, where $\Phi$ restricts the given instance $C_0$ to the set $X$ and $\Psi$ is the identity functional (all of the above is in $\mathcal{N}$).

Given a problem $\mathcal{A}$, if there is a Turing functional $\Xi$ such that $\mathcal{N}$ realizes $\forall I \mathcal{A}(I,\Xi(I))$, then $\mathcal{A}$ is Weihrauch below every problem. We will call $\mathcal{A}$ \emph{Weihrauch trivial} or just trivial in $\mathcal{N}$. If a problem $\mathcal{A}_0$ is Weihrauch below a trivial problem $\mathcal{A}$, it is also trivial, witnessed by $\Psi(I_0,\Xi(\Phi(I_0)))$. 

One of our main themes is to view Weihrauch reductions as a proof-theoretic tool. While we do not systematically investigate the general Weihrauch degree structure over $\mathcal{N}$, in Section~\ref{swr} we explore the Weihrauch reductions between a number of principles or problems logically equivalent to $\mathrm{B}\Sigma^0_2$. There are other places within the paper where it is possible to make the same diversion, but in those cases we have found restraint. Good references for the Weihrauch reducibilities (over the standard model) and related material are Hirschfeldt and Jockusch \cite{MR3518779} and Dzhafarov and Mummert \cite{MR4472209}.

\section{Equivalences of $\mathrm{B}\Sigma^0_2$}

The following proves the first result of Hirst, Theorem~\ref{hirst}(a). Our first clause (a) is discussed in almost this form in \cite{MR4472209}, see Definition 3.3.1, and Section 2 of Frittaion and Marcone \cite{MR2997030}. We will discuss the Weihrauch relations (over $\mathcal{N}$) between these principles and another principle in Section~\ref{swr}.

\begin{theorem}\label{equivBS2}
	In $\mathcal{N}$, the following are equivalent:
	\begin{enumerate}[label=(\alph*)]
		\item Every finite union of finite c.e.\ sets is finite.
		\item $\mathrm{B}\Sigma^0_2$.
		\item The Infinite Pigeonhole Principle. 
	\end{enumerate}
\end{theorem}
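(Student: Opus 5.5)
I will prove the cycle (b)$\Rightarrow$(a)$\Rightarrow$(c)$\Rightarrow$(b), all inside $\mathcal{N}$ and working only over $F$.

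\textbf{(b)$\Rightarrow$(a).} Suppose we have $n$ finite c.e.\ sets $W_{e_i}$, $i<n$. Finiteness of $W_{e_i}$ is the $\Sigma^0_2$ statement $\exists b_i \forall x\,\forall s\,(x\in W_{e_i,s}\to x<b_i)$. Applying $\mathrm{B}\Sigma^0_2$ as in \eqref{BS2}, we obtain a single $b$ such that every $b_i$ can be taken below $b$. Then every element of $\bigcup_{i<n}W_{e_i}$ is at most $b$, so the union is finite.

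\textbf{(a)$\Rightarrow$(c).} Let $C$ be a $\Sigma^0_1$ partial coloring into $k$, and let $X$ be an infinite subset of its domain; by the remark in Section~\ref{ct} we may take $X$ computable. Suppose, toward a contradiction, that $C^{-1}(j)\cap X$ is finite for every $j<k$. Each of these sets is c.e., uniformly in $j$. By (a), their union is finite. But that union is all of $X$, since $X\subseteq\operatorname{dom} C$ and $C$ takes values below $k$. This contradicts the infiniteness of $X$.

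\textbf{(c)$\Rightarrow$(b).} This is the main obstacle. Assume $\forall i<n\,\exists b_i\,\exists a\,\forall c\,\theta(i,b_i,a,c)$. First merge the pair $(b_i,a)$ into a single existential witness, so that the hypothesis reads $\forall i<n\,\exists w\,\forall c\,\theta'(i,w,c)$. Next define a $\Delta^0_1$ coloring $C$ of $\mathbb{N}$ into $n+1$ colors, in the style of \eqref{FU}. For each $i$, let $w_i(s)$ be the least $w<s$ such that $\forall c<s\,\theta'(i,w,c)$ holds, or $s$ if there is no such $w$. Set $C(s)$ to be the least $i<n$ for which $w_i(s)\neq w_i(s+1)$, and $C(s)=n$ if there is none. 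This is $\Delta^0_0$, so $C$ exists by $\Delta^0_1\text{-CA}$.

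For each $i<n$, color $i$ is used only finitely often. To see this, let $w^*$ be the least true witness for $i$. Each $w<w^*$ fails at some $c$, and we need a bound on all of these failure points. I would handle this by choosing $w^*$ least and applying $\mathrm{I}\Sigma^0_1$ (or the $\Delta^0_0$ least-number principle for the bounded statement about $w<w^*$), which should bound the failures below $w^*$. I expect this step to require care about exactly which induction is available in $F$.

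Once that bound is in hand, $w_i(s)$ stabilizes, so $C^{-1}(i)$ is finite. The Infinite Pigeonhole Principle, applied to the infinite set $X=\mathbb{N}$, then forces $C^{-1}(n)$ to be infinite. Pick some $s$ with $C(s)=n$ and $s$ past every stabilization point. Then $b=s$ bounds all the true witnesses, since $w_i(s)=w_i(t)$ for all $t>s$ and hence $w_i(s)$ is a genuine witness below $s$.

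The subtlety is that "past all stabilization points" already uses bounding. Instead I would argue directly by contradiction. If no $b$ works, then every $s$ lies before some change for some $i$. From this I would try to show that $C^{-1}(n)$ is empty beyond each point where some $w_i$ is non-stable, and then derive from (c) that some color $i<n$ is infinite. An infinite color $i$ means $w_i$ changes unboundedly often, which contradicts the existence of a true witness for $i$.
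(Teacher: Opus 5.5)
Your (b)$\Rightarrow$(a) and (a)$\Rightarrow$(c) are correct and match the paper. The paper proves (a)$\Leftrightarrow$(b) through the $\Sigma^0_2$-completeness of $\mathrm{FIN}$ (Equation~\ref{FU}) and (a)$\Leftrightarrow$(c) separately.

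\textbf{The gap is in (c)$\Rightarrow$(b), and neither of your two attempts closes it.} The trouble is the default colour $n$ for ``no $w_i$ changes at $s$''. Applied to $\mathbb{N}$, the Infinite Pigeonhole Principle can always be satisfied by colour $n$, and that tells you nothing. Infinitely many stages $s$ with $w_i(s)=w_i(s+1)$ for every $i<n$ do not give you a stage beyond all the changes. A stage beyond all the changes is exactly the uniform bound you are trying to prove, as you noticed yourself.

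Your proposed repair rests on a false claim. Suppose some $w_i$ never stabilizes. It then changes at unboundedly many stages, but these are typically sparse, and the stages in between are still coloured $n$. So $C^{-1}(n)$ is neither empty nor finite past a non-stable point. With this colouring, (c) applied to $\mathbb{N}$ can never hand you a colour $i<n$.

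\textbf{The missing step is to apply (c) to the right infinite set, one that excludes the default colour.}
\begin{enumerate}[label=(\roman*)]
\item Assume no $b$ works. Let $X=\{s \mid C(s)<n\}$ be the set of stages where some $w_i$ changes. This set is $\Delta^0_0$, so it lies in $\mathcal{P}$.
\item $X$ is infinite. Given $m$, take $b=m+1$: some $i<n$ has no true witness $w\le m$.
\item By $\mathrm{B}\Sigma^0_1$ there is a $c_0$ such that every $w\le m$ fails at some $c<c_0$. Hence $w_i(s)>m$ for all $s\ge\max(c_0,m+1)$.
\item Since $w_i(m)\le m$, the $\Delta^0_0$ least number principle gives a stage $t\ge m$ with $w_i(t)\ne w_i(t+1)$. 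Then $C(t)\le i<n$, so $t\in X$.
\item Now apply (c) to $C$ and $X$. This yields a $j<n$ such that $w_j$ changes unboundedly often, contradicting your stabilization claim for $w_j$.
\end{enumerate}
So some $b$ works, and unpairing $w$ gives the conclusion of Equation~\ref{BS2}. As you suspected, the stabilization claim needs a least true witness. That is $\mathrm{L}\Pi^0_1$, equivalently $\mathrm{I}\Sigma^0_1$, which is available over $\mathrm{RCA}_0$; the paper's Equation~\ref{FU} silently relies on the same fact.

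Once repaired, your direct (c)$\Rightarrow$(b) is just the paper's (c)$\Rightarrow$(a) composed with its $\mathrm{FIN}$ translation for (a)$\Rightarrow$(b). In the paper's (c)$\Rightarrow$(a), each element of the union is coloured by the first set to enumerate it. That colouring is partial with domain exactly the union, so no default colour ever arises. Factoring through (a) gives you this for free.
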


\begin{proof}
	First, we note that the statement ``the finite union of finite c.e.\ sets is finite'' follows from $\mathrm{B}\Sigma^0_2$: Fix $n$ and a computable total function $f$. Then 	
	\begin{equation}\label{FU2}\begin{split}
		\forall i < n \exists b_i \bigg[ \forall t > b_i(W_{f(i),b_i}& =W_{f(i),t}) \bigg]\implies \\ \exists b \forall i < n  \bigg[ \forall t > b (W_{f(i),b}& =W_{f(i),t})\bigg]
	\end{split}
	\end{equation}
	is an occurrence of $\mathrm{B}\Sigma^0_2$ (see Equation~\ref{BS2}). So (b) implies (a). The following implications show that (a) implies (b):
	\begin{equation}\label{FU3}\begin{split}
		\forall i < n \exists & b_i \bigg[  \exists a \forall c \theta(i,b_i,a,c) \bigg]\implies \\
		\forall i < n  \exists b_i \forall s>b_i &\bigg[ W_{f(i),b_i} = W_{f(i),s} \bigg]\implies \\ \exists b \forall i < n  \bigg[ \forall t> b &(W_{f(i),b} =W_{f(i),t})\bigg] \implies
		\\ \exists b \forall i < n \exists  &b_i < b \bigg[\exists a \forall c \theta(i,b_i,a,c) \bigg]
	\end{split}
	\end{equation} 
	The first clause is the hypothesis of Equation~\ref{BS2}. The last is the conclusion. The second clause follows from Equation~\ref{FU}. The third clause follows from (a) (see Equation~\ref{FU2}). The last implication follows from Equation~\ref{FU}.
	
	The contrapositive of the Infinite Pigeonhole Principle asserts that if one has finitely many pigeonholes, each containing finitely many pigeons, then the total number of pigeons is finite. Let $C$ be a $\Sigma^0_1$-definable partial coloring into $n$. Consider the c.e.\ sets $W_{f(i)} = \{x \mid C(x)=i\}$. If every $W_{f(i)}$ is finite for $i<n$, then (a) implies that the domain of $C$ is finite. A finite collection of c.e.\ sets $W_{f(i)}$ for $i < n$ induces a $\Sigma^0_1$ partial coloring: $C(x)=i$ iff there is an $s$ such that $i$ is the least $e$ such that $x \in W_{f(e),s}$ and for all $t<s$ and all $i < n$, $x \not\in W_{f(i),t}$. If the $W_{f(i)}$ are finite, then (c) implies the domain of $C$ is finite, and hence the union of the $W_{f(i)}$ is also finite. Hence, (a) and (c) are equivalent. 	
\end{proof}

Using compactness, one can construct a (computable) first-order model $\mathcal{M}$ of $P^- + \mathrm{I}\Sigma^0_1 + \neg \mathrm{B}\Sigma^0_2$. To make this a model of our required second-order theory, just add the $\mathcal{M}$-computable sets. In $\mathcal{M}$, the failure of $\mathrm{B}\Sigma^0_2$ can always be witnessed by a sequence of $\mathcal{M}$-finite, downward closed (under $<_\mathcal{M}$) $\mathcal{M}$-c.e.\ sets $\{ W_{f(e)} \mid e < a\}$, where $a$ is a nonstandard integer of $\mathcal{M}$ and $f$ is $\mathcal{M}$-computable, whose union is $|\mathcal{M}|$, all integers in $\mathcal{M}$. 

In Section~3 of an unpublished paper by Groszek and Slaman \cite{Groszek.Slaman:94}, there is a construction of a model where $\mathrm{B}\Sigma^0_2$ fails but $\mathrm{I}\Sigma^0_1$ holds. One can modify that construction to get an explicit $f$ and $a$ witnessing the failure of $\mathrm{B}\Sigma^0_2$ as above. Similar models are discussed in Belanger, Chong, Wang, and Yang \cite{MR4328699} and in Haken's thesis \cite{MR3295326}. 

\section{Weihrauch reductions from $\mathrm{RT}^2_2$ and $\mathrm{SRT}^2_2$} 

In this section and Section~\ref{hirst3}, we show the two remaining results of Hirst using Weihrauch reducibilities inside $\mathcal{N}$. We are going to consider some of the above arithmetic formulas as problems. The following theorem also appears in Hirschfeldt and Jockusch \cite{MR3518779} as Theorem~2.10(8). It proves the second theorem of Hirst, Theorem~\ref{hirst}(b).  

\begin{theorem}\label{wr1}
  The Infinite Pigeonhole Principle is (strongly) Weihrauch reducible over $\mathcal{N}$ to $\mathrm{RT}^2_2$ (or even $\mathrm{RT}^n_2$, which will be discussed further in Section~\ref{last}). 
\end{theorem}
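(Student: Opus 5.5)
The plan is the standard reduction: from a pigeonhole instance build a $2$-coloring of pairs that says whether two pigeons share a hole, and read the infinite hole off any homogeneous set. First, by the discussion in Subsection~\ref{wr}, $\mathrm{RT}^1_{<\infty}$ is strongly Weihrauch equivalent to the Infinite $\Delta^0_1$-Pigeonhole Principle. So we may take an instance to be a computable coloring $C_0 \colon X \to k$ on a computable $\mathcal{N}$-infinite set $X$. The value $k$ is recoverable from the instance, and $C_0$ is total on $X$.

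The forward functional $\Phi$ sends $C_0$ to the coloring $C$ of $[X]^2$ given by $C(\{x,y\}) = 0$ if $C_0(x) = C_0(y)$ and $C(\{x,y\}) = 1$ otherwise. This is $\Delta^0_1$ in $C_0$, so it lies in $\mathcal{P}$ by $\Delta^0_1\text{-CA}$.

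Let $H \subseteq X$ be any $\mathcal{N}$-infinite homogeneous set for $C$. We rule out color $1$. If $H$ were homogeneous for $1$, then the elements of $H$ would have pairwise distinct $C_0$-colors. Take the first $k+1$ elements of $H$; these exist because $H$ is infinite, and the finite set is coded using total exponentiation. Then $C_0$ restricted to this set is an injection from $k+1$ into $k$, contradicting the finite pigeonhole principle. The finite pigeonhole principle is provable in $\mathrm{I}\Delta^0_0$ (with exponentiation) for coded finite functions, by $\Delta^0_0$-induction on $k$. Hence $H$ is homogeneous for $0$, so $C_0$ is constant on $H$. If $x_0 = \min H$ and $c = C_0(x_0)$, then $H \subseteq C_0^{-1}(c) \cap X$, and that set is infinite.

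The backward functional therefore only has to output $H$ itself, and $\Psi(S_1) = S_1$ suffices; this makes the reduction strong. If the problem's solution is meant to be the colour $c$, or the set $C_0^{-1}(c)\cap X$, then $\Psi$ computes $c = C_0(\min H)$. That still works in the strong form provided the solution is taken to be the infinite set $H$ itself, which is a subset of one color class.

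The remaining clause of Definition~\ref{equiv}, that $\mathcal{N}$ realizes $\mathrm{RT}^2_2$, is part of the hypothesis in the intended reading: we work in an $\mathcal{N}$ that satisfies $\mathrm{RT}^2_2$. For $\mathrm{RT}^n_2$, use $C(\{x_1<\dots<x_n\}) = 0$ iff $C_0(x_1) = C_0(x_2)$. A homogeneous set of color $1$ would again give $k+1$ distinct colors, obtained by appending $n-2$ further elements of $H$ above the first $k+1$; so the homogeneous color is $0$ and $C_0$ is constant on $H$.

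The main obstacle is to make sure nothing in this argument needs more than $\mathrm{RCA}_0^*$. Three points need checking: the finite pigeonhole step when $k$ may be nonstandard, extracting the first $k+1$ elements of $H$ as a coded finite set, and that the domain restriction from $\Sigma^0_1$ to $\Delta^0_1$ colorings is uniform. I would handle the first two by an explicit $\Delta^0_0$-induction on $j \le k+1$: for each such $j$, the code of the first $j$ elements of $H$ exists, and the restriction of $C_0$ to it is injective.
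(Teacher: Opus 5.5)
Your argument is the paper's own. It uses the same coloring recording whether two pigeons share a hole (with $0$ and $1$ swapped, which is immaterial) and the same exclusion of the ``pairwise distinct'' color. The backward functionals also match: $\Psi(H)=H$ for the strong version, or $C_0(h)$ for some $h\in H$ if the color is wanted. For $\mathrm{RT}^n_2$ you likewise use only the first two coordinates.

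The problem is the one place where you go further than the paper. You propose to produce the first $k+1$ elements of $H$ by $\Delta^0_0$-induction on $j\le k+1$. But ``a code for $j$ elements of $H$ exists'' has an unbounded existential quantifier, over the code or over a $b$ with $|H\cap[0,b]|\ge j$. So this is $\Sigma^0_1$-induction, which $\mathrm{RCA}_0^*$ does not provide. The finite pigeonhole step is fine once the code is in hand; what fails is getting the code.

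This cannot be repaired inside $\mathrm{RCA}_0^*$. Suppose $\mathrm{I}\Sigma^0_1$ fails, and fix a proper cut $I=\{x:\exists y\,\delta(x,y)\}$ with $\delta\in\Delta^0_0$. Let $A$ be the $\Delta^0_0$-definable set of pairs $\langle x,y\rangle$ with $y$ least such that $\delta(x,y)$. Then $A$ is unbounded: a bound $b$ would make $I=\{x:\exists y<b\,\delta(x,y)\}$ a $\Delta^0_0$-definable proper cut, contradicting $\mathrm{I}\Delta^0_0$. Color $\langle x,y\rangle\in A$ by $x$. For any $k\notin I$ this is a $k$-coloring in which every color class is a singleton. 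So $A$ itself is homogeneous for your color $1$, and the pigeonhole instance has no solution at all.

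Moreover, $\mathrm{RCA}_0^*+\mathrm{RT}^2_2$ does not prove $\mathrm{I}\Sigma^0_1$ (Ko{\l}odziejczyk, Kowalik and Yokoyama), so such an $\mathcal{N}$ can even realize $\mathrm{RT}^2_2$. The reduction therefore genuinely needs $\mathcal{N}\models\mathrm{I}\Sigma^0_1$, which is Hirst's base theory $\mathrm{RCA}_0$. There your induction on $j$ is a legitimate $\Sigma^0_1$-induction and the proof is complete. The paper's one-line step (``since there are only finitely many colors available'') rests on the same tacit assumption, so you should state that hypothesis rather than claim that $\mathrm{RCA}_0^*$ suffices.
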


\begin{proof}
	Colorings whose domain includes an infinite computable set $X$ are our instances of the Infinite Pigeonhole Principle. Let $C_0$ be such a finite coloring. We define the coloring $\Phi(C_0)$ on $[X]^2$ by setting $\Phi(C_0)(x,y) = 1$ if $C_0(x) = C_0(y)$, and $0$ otherwise (if we are coloring $n$-tuples, just ignore all but the first $2$ elements to get a coloring of $[X]^n$). Let $H_1 \subseteq X$ be homogeneous for $\Phi(C_0)$. Since there are only finitely many colors available for $C_0$, $H_1$ cannot be homogeneous with color $0$ (which would require all elements of $H_1$ to have distinct colors under $C_0$). So all integers in $H_1$ have the same color under $C_0$. 
	
	We have a choice regarding the representation of the solution: do we return an infinite subset of a single color class, or do we return the index of the color class itself? In the former case, we can set $\Psi(H_1) = H_1$, yielding a strong Weihrauch reduction. In the latter case, we require the original coloring $C_0$ to identify the color: choosing any $h \in H_1$, we set $\Psi(C_0, H_1) = C_0(h)$, which gives a standard Weihrauch reduction.
\end{proof}

Let's assume $\mathrm{RT}^1_{<\infty}$ always has a solution in $\mathcal{N}$. Given a hole $h$, we can enumerate the set of pigeons entering the hole, $\{x \mid C_0(x)=h\}$. This set is either finite or infinite. In the latter case, there is an infinite computable subset of $\{x \mid C_0(x)=h\}$. So if we know which hole is infinite, we can find our solution computably. However, a diagonalization argument shows that there is no Turing functional $\Xi$ such that $\Xi(C_0)$ is a solution to $C_0$. So $\mathrm{RT}^1_{<\infty}$ is not trivial but does have computable solutions. By Specker \cite{Specker:71}, there is an instance of $\mathrm{RT}^2_2$ without any computable solutions. So $\mathrm{RT}^2_2 \nleq_W \mathrm{RT}^1_{<\infty}$. 
 
That leaves us with stable colorings and $\mathrm{SRT}^2_2$. The above reduction does not work since the coloring $\Phi(C_0)$ is not stable when more than one hole has infinitely many pigeons. But something more is true. $\mathrm{RT}^1_{k}$ is the Infinite Pigeonhole Principle when there are only $k$ pigeonholes. By Hirschfeldt and Jockusch \cite{MR3518779} and Brattka and Rakotoniaina \cite{MR3743611}, we know $\mathrm{RT}^1_{3} \nleq_{W} \mathrm{SRT}^2_2$ (see Theorem~2.10(4) of \cite{MR3518779}). This is a diagonalization argument exploiting the uniform nature of Weihrauch reducibility and the number of colors available. So $\mathrm{RT}^1_{<\infty} \not\leq_{W} \mathrm{SRT}^2_2$. Their proof is done in the standard model but also holds in $\mathcal{N}$. For more details, see the next subsection. 
  
Nevertheless, Hirst's result was improved to $\mathrm{SRT}^2_2$ in Cholak, Jockusch, and Slaman \cite{CJS-2001} (see Lemma~10.6). Our goal is to abstract a Weihrauch reduction from that proof. Consider the statement logically equivalent to the Infinite Pigeonhole Principle: if we color infinitely many pigeons using a coloring $C_0$ such that each color is used only finitely often (the instance), then there are infinitely many colors used (the solution). This is a sort of Rainbow Pigeonhole, and we will call it ``Rainbow Pigeonhole''. Again, in this case, we can assume that the domain of our coloring contains a computable set $X$ and just work within $X$.   
  
\begin{theorem}\label{srt22}
	Rainbow Pigeonhole is Weihrauch reducible over $\mathcal{N}$ to $\mathrm{SRT}^2_2$. 
\end{theorem}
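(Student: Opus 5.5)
Given an instance of Rainbow Pigeonhole, we work on a computable $\mathcal{N}$-infinite set $X$ contained in the domain of $C_0$, and assume each color class $\{x \mid C_0(x)=i\}$ is $\mathcal{N}$-finite. The forward functional builds the stable coloring
\[
\Phi(C_0)(x,y) = \begin{cases} 1 & \text{if } C_0(x) = C_0(z) \text{ for some } z \in X \text{ with } z \geq y,\\ 0 & \text{otherwise,}\end{cases}
\]
for $x<y$ in $X$. Equivalently, the color of $\{x,y\}$ is $1$ iff the color of $x$ is used again at or beyond $y$. Literally this is $\Sigma^0_1$, not computable, so in practice I would replace it by the computable approximation ``$C_0(x)=C_0(z)$ for some $z\in X$ with $y\le z$, found by stage $y$''. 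The cleaner computable version is $\Phi(C_0)(x,y)=1$ iff there is $z \in X$ with $x<z\le y$ and $C_0(z)=C_0(x)$, i.e.\ the color of $x$ has been reused by time $y$. I will use this version.

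Stability follows because $C_0(x)$'s color class is finite. So $\lim_y \Phi(C_0)(x,y)$ exists: it is $1$ if the color of $x$ is reused above $x$, and $0$ if $x$ is the last element of its color class. Since $\mathcal{N}$ realizes $\mathrm{SRT}^2_2$, let $H_1$ be an infinite homogeneous set.

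The case analysis on the color of $H_1$ comes next. If $H_1$ is homogeneous of color $0$, then for $x<y$ in $H_1$, no element of $X$ in $(x,y]$ has the color of $x$. In particular $C_0(x)\neq C_0(y)$, so $C_0$ is injective on $H_1$, and the backward functional $\Psi(C_0,H_1)=\{C_0(h)\mid h\in H_1\}$ is an infinite set of colors. If $H_1$ had color $1$, then every $x\in H_1$ would have its color reused above $x$. I would rule this out using finiteness of color classes together with homogeneity. Take $x\in H_1$ and its color class $D$, which is finite. Pick the largest element of $D$ (this uses $\mathrm{I}\Delta^0_0$ / finite-set maxima, which is available for $\mathcal{N}$-finite c.e.\ sets only with care). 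Then choose $y\in H_1$ beyond it, giving $\Phi(C_0)(x,y)=1$ via a witness $z\le y$. That alone is no contradiction, so the color-$1$ case is really the problem. To fix this, I would swap the roles: set $\Phi(C_0)(x,y)=1$ iff no $z\in X$ with $x<z\le y$ has $C_0(z)=C_0(x)$. Then each limit is $1$ exactly at the last element of a color class, and homogeneous color $1$ yields injectivity as above. Homogeneous color $0$ means every $x\in H_1$ has its color reused within $(x,y]$ for every larger $y\in H_1$. This is not contradictory outright either, since elements that are not last in their class are plentiful. So I will instead have $\Psi$ handle both cases: in either case, output the set of colors of $H_1$, and argue it is infinite.

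The main obstacle is the case of a solution in the ``non-last'' color. My plan is to thin $H_1$ so that $\Psi$ outputs $\{C_0(h)\mid h\in H_1\}$, which in the bad case could a priori be finite. Here the hypothesis of the instance does the work. If $\{C_0(h)\mid h\in H_1\}$ were finite, then $H_1$ would be an infinite subset of finitely many color classes. By Theorem~\ref{equivBS2}(a), which is available because we are proving the principle for this instance via its logical equivalence, a finite union of finite c.e.\ sets is finite, and we get a contradiction. However, invoking $\mathrm{B}\Sigma^0_2$ is circular, and I expect this to be the crux. The remedy I would try is to make the backward functional output the infinite set of colors computably from $H_1$ by listing $C_0(h)$ for $h\in H_1$. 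Infiniteness of the output should then follow just from each color class being finite and $H_1$ being infinite, with only $\mathrm{I}\Sigma^0_1$-style reasoning. Checking that this step does not secretly need $\mathrm{B}\Sigma^0_2$ is where I expect the real difficulty.
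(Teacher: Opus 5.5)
There is a genuine gap, and it is the one you flagged yourself. In either version of your coloring, the limit color of $x$ depends on $x$: it records whether $x$ is the last element of its color class. So $\mathrm{SRT}^2_2$ may return a homogeneous set $H_1$ in the ``reused'' color, and nothing about such a set forces $C_0$ to take infinitely many values on it. This already happens in $\omega$. Take color classes $\{3k,3k+1,3k+2\}$ and $H_1=\{3k,3k+1 \mid k\in\mathbb{N}\}$. This $H_1$ is homogeneous for ``the color of $x$ is reused in $(x,y]$'', and $C_0$ is two-to-one on it.

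To show $\{C_0(h)\mid h\in H_1\}$ is infinite in that case, you must argue that an infinite set cannot be covered by finitely many finite color classes. That is the Infinite Pigeonhole Principle, i.e.\ $\mathrm{B}\Sigma^0_2$ by Theorem~\ref{equivBS2}, which is exactly what this reduction is supposed to deliver. So your $\Psi$ is never shown to be correct, and the proposal stops at the crux rather than completing a proof. You could appeal to $\mathcal{N}$ realizing $\mathrm{SRT}^2_2$ and hence $\mathrm{B}\Sigma^0_2$, but deriving that requires the argument below anyway.

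The missing idea is to use a coloring whose limit color is the same for every $x$. Set $\Phi(C_0)(x,y)=1$ iff $C_0(x)=C_0(y)$, and $0$ otherwise, as the paper does. Since each color class is finite, $\lim_y \Phi(C_0)(x,y)=0$ for every $x$, so the coloring is stable. The bad color is now excluded by a single finite set rather than a finite union. A homogeneous set of color $1$ would lie inside the color class of its least element, which is finite, contradicting that $H_1$ is infinite. Hence $H_1$ has color $0$, $C_0$ is injective on $H_1$, and $\Psi(C_0,H_1)=\{C_0(h)\mid h\in H_1\}$ is an infinite set of colors. This is your own color-$0$ argument, with the second case eliminated.
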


\begin{proof}
	Let $C_0$ be a coloring such that each color is used only finitely often. Like above consider the coloring $\Phi(C_0)(x,y)=1$ iff $C_0(x)=C_0(y)$, and $0$ otherwise. For all $x$, $\lim_y \Phi(C_0)(x,y)=0$.  Hence $\Phi(C_0)$ is a stable $2$-coloring of pairs. Let $H_1$ be homogeneous for $\Phi(C_0)$. Now $H_1$ must have color $0$, and from $H_1$ and $C_0$, one can compute infinitely many colors.
\end{proof}

\section{Weihrauch relations over $\mathcal{N}$ among problems equivalent to $\mathrm{B}\Sigma^0_2$}\label{swr}

This section can be omitted on a first reading of the paper. Here we explore the Weihrauch reductions between the various statements logically equivalent to $\mathrm{B}\Sigma^0_2$ discussed in the previous section. This work drives home the fact that the Weihrauch degrees need not be closed under logical equivalences. We should note that similar work has been done comparing Ramsey-theoretic problems equivalent to $\mathrm{I}\Sigma^0_2$; see Davis, Hirschfeldt, Hirst, Pardo, Pauly, and Yokoyama \cite{doi:10.3233/COM-180244} and Pauly, Pradic, and Sold\`{a} \cite{doi:10.3233/COM-230437}. 

If an instance of Rainbow Pigeonhole has a solution in $\mathcal{N}$, we can enumerate the colors $\{i \mid \exists x (C_0(x)=i)\}$, and this infinite c.e.\ set must contain an infinite computable set. The context of Theorem~\ref{srt22} is just to show that Rainbow Pigeonhole is realized in $\mathcal{N}$ when $\mathcal{N}$ realizes $\mathrm{SRT}^2_2$. If Rainbow Pigeonhole is realized, then it is a trivial problem. If $\mathcal{N}$ realizes $\mathrm{RT}^1_{< \infty}$, then Rainbow Pigeonhole is also realized (as they are logically equivalent). This, and the fact that $\mathrm{RT}^1_{<\infty} \not\leq_{W} \mathrm{SRT}^2_2$, implies that $\text{Rainbow Pigeonhole} <_{W} \mathrm{RT}^1_{< \infty}$ even though they are logically equivalent.  
  
Let's consider the statement ``the finite union of finite c.e.\ sets is finite'' as a problem. The instance is a collection of finite c.e.\ sets $\{ W_{f(e)} \mid e < n\}$. Instances of this problem do not include a bound on the size of each individual set. If the size were included, we could just add them up (addition is total in $\mathcal{N}$) to get a bound on the union. A solution is a bound on the size of the union (or if you want, an infinite set of bounds). A diagonalization argument, using the recursion theorem, shows that there is no Turing functional $\Xi$ such that $\Xi(\{ W_{f(e)} \mid e < n\})$ is a solution. So this problem, if it has a solution, is not trivial. Hence, this problem is also strictly Weihrauch above Rainbow Pigeonhole.

The above holds for $\mathrm{B}\Sigma^0_2$ as a problem. The proof of Theorem~\ref{equivBS2} actually shows that ``the finite union of finite c.e.\ sets is finite'' and $\mathrm{B}\Sigma^0_2$ are strongly Weihrauch equivalent. 

Viewed as a problem, ``the finite union of finite c.e.\ sets is finite'' is strongly Weihrauch reducible to $\mathrm{RT}^1_{< \infty}$: Let $\Phi(\{ W_{f(e)} \mid e < n\})(s+1) = s+1$ iff there is an $e<n$ such that $W_{f(e),s} \neq W_{f(e),s+1}$ and $\Phi(\{ W_{f(e)} \mid e < n\})(s)$ otherwise. However, this reduction is strict. Working inside $\mathcal{N}$, assume for a contradiction that $\mathrm{RT}^1_{< \infty}$ is Weihrauch reducible to ``the finite union of finite c.e.\ sets is finite'' via $\Phi$ and $\Psi$. We will computably build an instance $C$ of $\mathrm{RT}^1_{< \infty}$, with more details in a few sentences. Using the recursion theorem, we can assume that we know $\Phi(C)$. We can assume $\Phi(C)$ is a computable finite set of finite c.e.\ sets $\{ W_{f(e)} \mid e < n \}$. At stage $s$, let $x_s$ be the least upper bound of $\bigcup \{ W_{f(e),s} \mid e < n \}$. If our Weihrauch reduction is correct, $\lim_s x_s$ exists. At some stage $s$, $[x_s, \infty)$ becomes a solution to $\Phi(C)$. At some sufficiently large stage $s$, $\Psi_s(C,[x_s, \infty))$ must contain a nonzero number of integers with the same color $c_s$. Again, if our Weihrauch reduction is correct, $\lim_s c_s =c$ exists, and furthermore, there is a stage $t$ such that for all $s \geq t$, $c_s=c$. We can now complete the construction of $C$: at stage $s$, color $s$ with any color but $c_s$ (if $c_s$ exists). So $C$ cannot color infinitely many integers with color $c$. Contradiction.

We have that $\text{Rainbow Pigeonhole} <_{W} \text{``finite union of finite c.e.\ sets is finite''} <_{W} \mathrm{RT}^1_{< \infty}$ despite the fact that these three problems are logically equivalent. 

\section{Weihrauch reductions from $\mathrm{RT}^2_{< \infty}$ and $\mathrm{SRT}^2_{< \infty}$} \label{hirst3}

We now turn to Hirst's third result, Theorem~\ref{hirst}(c), showing that $\mathrm{RT}^2_{<\infty}$ implies $\mathrm{B}\Sigma^0_3$. This result was improved to $\mathrm{SRT}^2_{<\infty}$ in Cholak, Jockusch, and Slaman \cite{CJS-2001} (see Theorem~11.4) and also in Mytilinaios and Slaman \cite{Mytilinaios.Slaman:94}, Proposition~5.3. Again, our goal is to extract a Weihrauch reduction from that proof.  
  
First, we need an equivalent form of $\mathrm{B}\Sigma^0_3$ using $\mathrm{COF}$. Recall $\mathrm{COF} = \{ e \mid W_e \text{ is cofinite} \} = \{ e \mid \exists k \forall x> k \exists s (x \in W_{e,s})\}$ is $\Sigma^0_3$-complete.  
  
\begin{theorem}\label{BS3}
	The following are equivalent:
	\begin{enumerate}[label=(\alph*)]
		\item The intersection of finitely many cofinite c.e.\ sets is cofinite.
		\item $\mathrm{B}\Sigma^0_3$. 
	\end{enumerate}
\end{theorem}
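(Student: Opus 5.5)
We will follow the pattern of the proof of Theorem~\ref{equivBS2}, with $\mathrm{COF}$ playing the role that $\mathrm{FIN}$ played there. The key input is that $\mathrm{COF}$ is $\Sigma^0_3$-complete. So for every $\Delta^0_0$ formula $\theta$ there is a computable total one-to-one $f$ with
\begin{equation*}
\exists b_i \exists a \forall c \exists d\, \theta(i,b_i,a,c,d) \text{ iff } f(i) \in \mathrm{COF} \text{ iff } \exists k_i \forall x > k_i \exists s\, (x \in W_{f(i),s}).
\end{equation*}
This is the analogue of Equation~\ref{FU}. We will need to say a word about why this reduction goes through in $\mathcal{N}$: the usual construction of $f$ is uniform and computable, and the verification only uses that $b_i$ codes the pair of existential witnesses. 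Even so, this is where any induction beyond $\mathrm{I}\Delta^0_0$ plus totality of exponentiation might sneak in, so we will check it carefully, or else assume $\mathrm{B}\Sigma^0_2$ (or $\mathrm{I}\Sigma^0_1$) as part of the ambient setting.

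\textbf{(b) implies (a).} Fix $n$ and $f$, and suppose each $W_{f(i)}$, $i<n$, is cofinite. Then
\begin{equation*}
\forall i<n \exists k_i \forall x>k_i \exists s\,(x\in W_{f(i),s}).
\end{equation*}
The matrix $\forall x>k_i\exists s(\cdots)$ is $\Pi^0_2$, so this is a hypothesis of $\mathrm{B}\Sigma^0_3$. Applying $\mathrm{B}\Sigma^0_3$ gives a single $k$ with $\forall i<n\, \exists k_i<k\, \forall x>k_i \exists s\,(x\in W_{f(i),s})$, and hence $\forall i<n\, \forall x>k\, \exists s\,(x\in W_{f(i),s})$.

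To conclude that $\bigcap_{i<n} W_{f(i)}$ is cofinite, we must check that this intersection is a c.e.\ set containing every $x>k$. Membership in the intersection is $\forall i<n\,\exists s\,(x\in W_{f(i),s})$. To collapse this to $\exists s\,\forall i<n\,(x\in W_{f(i),s})$ we need a bounded collection of $\Sigma^0_1$ formulas, and this is supplied by $\mathrm{B}\Sigma^0_3$, which implies $\mathrm{B}\Sigma^0_1$. So the intersection is $W_g$ for a computable index $g$, and it is cofinite.

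\textbf{(a) implies (b).} We run the chain of implications in Equation~\ref{FU3}. Starting from $\forall i<n\,\exists b_i[\exists a\forall c\exists d\,\theta]$, we use $f$ to get $\forall i<n\,(f(i)\in\mathrm{COF})$. By (a), $\bigcap_{i<n}W_{f(i)}$ is cofinite, say above $k$. Then each $W_{f(i)}$ contains every $x>k$, so a single $k$ witnesses $\forall i<n\,\forall x>k\,\exists s\,(x\in W_{f(i),s})$.

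Finally we pull the bound back through the construction of $f$. We choose $f$ so that the witness $k$ for cofiniteness bounds a witness $b_i$ for the original $\Sigma^0_3$ formula. For example, in the standard construction $W_{f(i)}$ gets cofinitely filled above the code of the least correct guess $\langle b,a\rangle$, so $k$ bounds some $b_i$. This yields $\exists b\,\forall i<n\,\exists b_i<b\,[\cdots]$.

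\textbf{Main obstacle.} We expect the hardest step to be this last one: arranging the $\Sigma^0_3$-completeness reduction uniformly so that a cofiniteness bound $k$ really bounds the original witnesses. Establishing that reduction also needs some care in a weak model. The fallback is to replace cofiniteness of the intersection in (a) with its stronger consequence, a common $k$ for all $i$, which is how (a) is actually used.

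Note also a subtlety in the direction (a) implies (b): forming the intersection as a c.e.\ set itself needs $\mathrm{B}\Sigma^0_1$. In the direction (b) implies (a) this is supplied by $\mathrm{B}\Sigma^0_3$, as above. In the direction (a) implies (b) we are only given (a), so we will read (a) as being about $W_{f(i)}$ being cofinite with a common bound, or else assume $\mathrm{B}\Sigma^0_1$ in $F$.
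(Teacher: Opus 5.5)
Your outline is the paper's. (b)$\Rightarrow$(a) is the observation that the formalized (a) is an instance of $\mathrm{B}\Sigma^0_3$ with a $\Pi^0_2$ matrix. (a)$\Rightarrow$(b) reruns the chain of Equation~\ref{FU3} with $\mathrm{COF}$ in place of $\mathrm{FIN}$. Your $\mathrm{B}\Sigma^0_1$ caveats are harmless: the paper simply reads (a) as the common-bound statement, which is your fallback reading.

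\textbf{The gap is in the last implication of the chain, the one step you supply beyond the paper.} You argue that $W_{f(i)}$ is cofinitely filled above the code $m$ of the least correct guess, ``so $k$ bounds some $b_i$.'' That inference runs the wrong way. $(m,\infty)\subseteq W_{f(i)}$ only says that $m$ is \emph{a} cofiniteness witness. The $k$ handed to you by (a) is an arbitrary witness and can lie far below $m$ (e.g.\ if $W_{f(i)}$ is everything). Moreover, appealing to the \emph{least} correct guess presupposes $\mathrm{L}\Pi^0_2$, i.e.\ $\mathrm{I}\Sigma^0_2$, which you are not entitled to at this point.

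\textbf{What the chain actually needs is the converse property of the reduction:} whenever $(k,\infty)\subseteq W_{f(i)}$, some correct guess lies below a computable function of $k$. The usual marker construction has this property. Whenever guess $m=\langle b,a\rangle$ verifies a new instance of its $\Pi^0_2$ matrix at stage $s$, enumerate $a^s_m$, the $m$-th element of $\overline{W_{f(i),s}}$.
\begin{itemize}
\item If no guess $\le j$ is correct, then beyond some stage none of them fires. So the markers $a_0<\cdots<a_j$ settle and remain in $\overline{W_{f(i)}}$.
\item Hence $(k,\infty)\subseteq W_{f(i)}$ forces a correct guess $\langle b_i,a\rangle\le k+1$, since otherwise $k+2$ settled markers would lie in $[0,k]$.
\item Therefore $b=k+2$ is a common bound for all $i<n$.
\end{itemize}

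\textbf{This verification uses $\mathrm{B}\Sigma^0_2$.} ``Beyond some stage none of them fires'' collects finitely many $\Sigma^0_2$ facts into one stage, which is an instance of $\mathrm{B}\Sigma^0_2$. So the $\mathrm{I}\Sigma^0_1$ option in your fallback would not suffice. The $\mathrm{B}\Sigma^0_2$ option would, at the cost of strengthening the base theory. Alternatively, do what the paper implicitly does: treat this witness correspondence as part of the $\Sigma^0_3$-completeness of $\mathrm{COF}$ in $\mathcal{N}$, just as the same $b_i$ appears on both sides of Equation~\ref{FU}.
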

 
\begin{proof}
	Let us formally write out (a):
	\begin{equation*}\begin{split}
		\forall i < n \exists b_i \bigg[ \forall x > b_i \exists s (x &\in W_{f(i),s}) \bigg]
		\implies \\ 
		\exists b \forall i < n  \bigg[ \forall x > b \exists s (x &\in W_{f(i),s})\bigg]
	\end{split}
	\end{equation*}
	The part in the brackets is $\Pi^0_2$, so this is an occurrence of $\mathrm{B}\Sigma^0_3$. So (b) implies (a). For the other direction, we will allow the reader to make the needed modifications in Equations~\ref{FU3}, \ref{BS2}, \ref{FU}, and \ref{FU2}.
\end{proof}
 
The contrapositive of (a) is: ``if the intersection of finitely many c.e.\ sets is not cofinite, then one of the c.e.\ sets is not cofinite''. The instance $I$ here is a collection of c.e.\ sets $\{W_{f(e)} \mid e < n\}$. Recall that a c.e.\ set need not be in $\mathcal{P}$; it is just $\Sigma^0_1$-definable over $\mathcal{A}$, possibly with parameters. Furthermore, if a c.e.\ set $W_{f(e)}$ is cofinite, we should not expect to find an $\mathcal{N}$-infinite set $X \in \mathcal{P}$ such that $X \cap W_e = \emptyset$. So a solution $S$ here is just an $e$ such that $W_{f(e)}$ is not cofinite. $\mathcal{A}(I,S)$ here just checks that if the intersection of the sets is not cofinite, then $W_{f(e)}$ is not cofinite. Let's call this problem $\mathrm{FICF}$ for the next theorem.  

\begin{theorem}\label{srt2i}
	$\mathrm{FICF}$ is Weihrauch reducible over $\mathcal{N}$ to $\mathrm{SRT}^2_{<\infty}$. 
\end{theorem}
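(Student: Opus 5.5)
The plan is to turn an instance $\{W_{f(e)} \mid e<n\}$ of $\mathrm{FICF}$ into a stable $n$-coloring of pairs. We approximate the complement of the intersection, enumerating its members in increasing order, and color each approximated element by the least $e$ whose set currently omits it. A naive coloring $C(x,s)=$ least $e<n$ with $x\notin W_{f(e),s}$ does not work. The points lying in the intersection would need an extra color $n$, and an infinite homogeneous set of color $n$ says nothing about which $W_{f(e)}$ fails to be cofinite. We avoid this by only coloring elements of the complement of the intersection.

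\emph{The forward functional.} Let $V_s=\bigcap_{e<n} W_{f(e),s}$, a $\Delta^0_0$ condition in $e$, $x$ and $s$. Let $m_x(s)$ be the $x$-th element (in increasing order) of the complement of $V_s$; this is computable from $s$ and $x$ and is defined once $s$ exceeds it. Since the sets $W_{f(e),s}$ only grow, the complement of $V_s$ only shrinks, so $m_x(s)$ is nondecreasing in $s$. If the intersection is not cofinite, every $m_x(s)$ is bounded by the true $x$-th element of the complement $\overline{\bigcap_e W_{f(e)}}$. Define
\[
\Phi(I)(x,s)=\text{the least } e<n \text{ with } m_x(s)\notin W_{f(e),s}.
\]
Such an $e$ exists because $m_x(s)\notin V_s$. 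Once $m_x(s)$ reaches its limit $m_x$, we have $m_x\notin\bigcap_e W_{f(e)}$. The color is then nondecreasing in $s$ and bounded by $n$, so it settles on the least $e$ with $m_x\notin W_{f(e)}$. Hence $\Phi(I)$ is a stable $n$-coloring of $[\mathbb{N}]^2$ whenever $I$ is a genuine instance.

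\emph{The backward functional.} Let $H$ be an infinite homogeneous set for $\Phi(I)$ with color $c$. For $x\in H$, $\Phi(I)(x,y)=c$ for all larger $y\in H$. Since the color of $x$ has a limit, that limit is $c$, so $m_x\notin W_{f(c)}$. The map $x\mapsto m_x$ is strictly increasing, so $\{m_x\mid x\in H\}$ is unbounded and disjoint from $W_{f(c)}$, and therefore $W_{f(c)}$ is not cofinite. We set $\Psi(I,H)=\Phi(I)(h_0,h_1)$, where $h_0<h_1$ are the first two elements of $H$. This uses $I$, so the reduction is Weihrauch but not strong, as in the second half of the proof of Theorem~\ref{wr1}. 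When $I$ is not an instance, or the intersection is cofinite, $\mathcal{A}(I,S)$ holds vacuously. We only need $\Phi(I)$ to be total, which it is once we default to color $0$ whenever $m_x(s)$ is not yet defined, and $\Psi$ to return something.

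\emph{The expected obstacle.} The delicate point is verifying inside $\mathcal{N}$, with only $\mathrm{RCA}_0^*$ available, that $\Phi(I)$ really is stable. Two limits must be shown to exist: $\lim_s m_x(s)$ and the limit of a nondecreasing sequence bounded by $n$. Both are instances of ``a bounded monotone $\Delta^0_0$ sequence converges'', which needs some $\Sigma^0_1$ bounding or induction. I would first try to derive the needed fragment from the hypothesis that $\mathcal{N}$ realizes $\mathrm{SRT}^2_{<\infty}$. By Theorem~\ref{srt22} and Theorem~\ref{equivBS2}, this gives $\mathrm{B}\Sigma^0_2$, and over $\mathrm{I}\Delta^0_0+\exp$ that yields $\mathrm{I}\Sigma^0_1$ as well, which handles both limits. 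The remaining checks, such as $\Phi$ and $\Psi$ being $\Delta^0_1$ and total, are routine.
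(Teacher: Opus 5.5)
Your design is the right one: point each $x$ at an element outside the intersection and colour it by the least $e$ omitting that element. But the stability argument has a genuine gap, at exactly the point you flagged as delicate.

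\textbf{The gap.} You bound $m_x(s)$ by ``the true $x$-th element'' of the $\Pi^0_1$ set $P=\overline{\bigcap_{e<n}W_{f(e)}}$. Nothing you have available shows that this element exists. Unboundedness of $P$ gives elements of $P$ above any bound, not an $x$-th one.

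The set of $x$ for which $P$ has an $x$-th element is a $\Sigma^0_2$-definable initial segment that contains $0$ and is closed under successor. Concluding that it is all of $\mathbb{N}$ is therefore an instance of $\mathrm{I}\Sigma^0_2$, and $\mathrm{B}\Sigma^0_2$ does not provide it.

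Concretely, in a model of $\mathrm{B}\Sigma^0_2+\neg\mathrm{I}\Sigma^0_2$ there is a $\Delta^0_2$ increasing cofinal map defined on a proper $\Sigma^0_2$ cut. A movable-marker construction turns this map into an unbounded co-c.e.\ set $P$ whose counting function ranges only over that cut. For $x$ beyond the cut, $m_x(s)\to\infty$. Already with $n=2$ the colour of such $x$ need not settle: take $W_{f(1)}=\overline{P}$, $W_{f(0)}=\overline{P}\cup\{\text{odd numbers}\}$, and let the markers alternate parity each time they move. So $\mathrm{I}\Sigma^0_1$ only handles your second limit; the first may not exist at all.

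In an $\mathcal{N}$ realizing $\mathrm{SRT}^2_{<\infty}$, $\mathrm{I}\Sigma^0_2$ does hold. But it holds only because $\mathrm{SRT}^2_{<\infty}$ implies $\mathrm{B}\Sigma^0_3$, which is precisely what this reduction is meant to re-derive. Invoking it would be circular.

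\textbf{The repair.} Use the paper's choice of witness: replace ``the $x$-th element of $P$'' by ``the least element of $P$ that is $\ge x$''.
\begin{itemize}
\item Because the intersection is not cofinite, $P$ has an element $\ge x$. The least one, $y^x$, exists by $\mathrm{L}\Pi^0_1$, that is, by $\mathrm{I}\Sigma^0_1$.
\item Its stage approximation is the least $y\ge x$ outside $\bigcap_{e<n}W_{f(e),s}$. (The paper also requires $y\le s$ and uses a spare colour $n$ otherwise, whose limit is never attained.) This approximation is nondecreasing in $s$ and bounded by $y^x$.
\item Your monotone-convergence argument then goes through using only the $\mathrm{I}\Sigma^0_1$ you correctly extracted via Theorems~\ref{srt22} and~\ref{equivBS2}.
\item Unboundedness of the omitted elements now comes from $y^h\ge h$ for $h\in H$, not from strict monotonicity of $x\mapsto m_x$.
\end{itemize}
Your backward functional $\Phi(I)(h_0,h_1)$, and your observation that the reduction uses $I$ and so is not strong, are exactly as in the paper.
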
	

\begin{proof}
	Our instance $I_0$ is a collection of c.e.\ sets $\{W_{f(e)} \mid e < n \}$ whose intersection, $\bigcap\{W_{f(e)} \mid e < n \}$, is not cofinite. Given a pair $(x,s)$, if there is a least $y^{x,s}$ and then a least $e^{x,s}$ such that $x \leq y^{x,s} \leq s$ and $y^{x,s} \notin W_{f(e^{x,s}),s}$, let $\Phi(I_0)(x,s)=e^{x,s}$. Otherwise, let $\Phi(I_0)(x,s)=n$. For all $x$, there is a least $y^x \geq x$ and then a least $e^x<n$ where $y^x \notin W_{f(e^x)}$. Hence, $\lim_s y^{x,s} = y^x$ and $\lim_s e^{x,s} = e^x$. Therefore, this coloring is stable and the limit color is not $n$. Let $H_1$ be homogeneous for $\Phi(I_0)$. Let $H_1 = \{ h_0 < h_1 < \dots < h_i < h_{i+1} < \dots \}$. We need to determine $H_1$'s color, which is not necessarily given. By homogeneity of $H_1$, we have that $e^{h_i}= e^{h_i,h_{i+1}}$ is our solution to $I_0$, and using $I_0$ we can calculate $e^{h_i,h_{i+1}}$. 
\end{proof}

\section{Working with large $n$}

\subsection{$\mathrm{B}\Sigma^0_n$}\label{BSN}

In $\mathcal{N}$, $\overline{\emptyset^{(n)}}$ is $\Pi^0_{n}$-complete. Hence, every $\Sigma^0_{n+1}$-definable set can be considered as a $\Sigma^0_{1}$-definable set with an oracle or parameter for $\overline{\emptyset^{(n)}}$. When viewed in this manner, the first half of the proof of Theorem~\ref{equivBS2} relativizes to the following: 

\begin{theorem}
	In $\mathcal{N}$, the following are equivalent: for all $n$,
	\begin{enumerate}[label=(\alph*)]
		\item Every finite union of finite $\Sigma^0_{n+1}$-definable sets is finite.
		\item $\mathrm{B}\Sigma^0_{n+2}$.
	\end{enumerate}
\end{theorem}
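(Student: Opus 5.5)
The plan is to relativize the first half of the proof of Theorem~\ref{equivBS2} to the oracle $\overline{\emptyset^{(n)}}$. A $\Sigma^0_{n+1}$-definable set then plays the role of a c.e.\ set, and a $\Sigma^0_{n+2}$ formula plays the role of a $\Sigma^0_2$ formula. Fix $n$. By the normal form for $\Sigma^0_{n+1}$ formulas (relative to the $\Pi^0_n$-complete set $\overline{\emptyset^{(n)}}$), every $\Sigma^0_{n+1}$-definable set is $W^{\overline{\emptyset^{(n)}}}_e$ for some $e$. Its stage approximation $x \in W^{\overline{\emptyset^{(n)}}}_{e,s}$ is a formula of the form $\exists t<s\, T^{\overline{\emptyset^{(n)}}}(e,x,t)$. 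Here the finite oracle use is $\Delta^0_0$ in a finite string, together with the assertion that this string is an initial segment of $\overline{\emptyset^{(n)}}$. So $x \in W^{\overline{\emptyset^{(n)}}}_{e,s}$ is $\Delta^0_{n+1}$, or more precisely $\Pi^0_n$ up to bounded quantifiers.

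With this in hand, the relativized index set is $\mathrm{FIN}^{\overline{\emptyset^{(n)}}} = \{e \mid \exists b \forall s>b\,(W^{\overline{\emptyset^{(n)}}}_{e,b} = W^{\overline{\emptyset^{(n)}}}_{e,s})\}$. It is $\Sigma^0_{n+2}$, and I will check that it is $\Sigma^0_{n+2}$-complete in the sense of Equation~\ref{FU}. Given a $\Sigma^0_{n+2}$ formula $\exists b_i \exists a \forall c\, \psi(i,b_i,a,c)$ with $\psi$ a $\Sigma^0_n$ formula, the matrix $\forall c\,\psi$ is $\Pi^0_{n+1}$, hence $\Pi^0_1$ in $\overline{\emptyset^{(n)}}$. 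The standard construction of Equation~\ref{FU} then produces a computable $f$ with $\exists b_i\exists a\forall c\,\psi$ iff $f(i)\in\mathrm{FIN}^{\overline{\emptyset^{(n)}}}$. This is the unrelativized construction run with an oracle: enumerate a new element whenever the current least candidate pair $(b,a)$ is refuted.

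The two directions then mirror Equations~\ref{FU2} and \ref{FU3}. For (b)$\Rightarrow$(a): given finitely many finite sets $W^{\overline{\emptyset^{(n)}}}_{f(i)}$, $i<m$, the hypothesis $\forall i<m\,\exists b_i\,\forall t>b_i(\dots)$ has a bracket that is $\Pi^0_{n+1}$. So $\mathrm{B}\Sigma^0_{n+2}$ yields a common bound $b$, and every element of the union is below $\max$ of the stage-$b$ approximations. That maximum exists, since each $W^{\overline{\emptyset^{(n)}}}_{f(i),b}$ is bounded by $b$. For (a)$\Rightarrow$(b): I translate an arbitrary instance of $\Sigma^0_{n+2}$ bounding into the $\mathrm{FIN}^{\overline{\emptyset^{(n)}}}$ form via $f$. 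I apply (a) to get a uniform stabilization bound $b$. The completeness construction then guarantees that the witnessing pair $(b_i,a)$ is found below the stage where the enumeration stops, so $b$ bounds the $b_i$'s, exactly as in the last step of Equation~\ref{FU3}.

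The main obstacle is working honestly inside $\mathcal{N}$, where induction is weak. First, the relativized $T$-predicate with oracle $\overline{\emptyset^{(n)}}$ must really behave as claimed: finite initial segments of $\overline{\emptyset^{(n)}}$ coded as integers must exist so that the stage approximations are well defined. Second, the completeness reduction for $\mathrm{FIN}^{\overline{\emptyset^{(n)}}}$ must be valid. Both steps need the prenex normal form and collection for the inner quantifiers, i.e., $\mathrm{B}\Sigma^0_{n+1}$, which the paper notes lets formulas be put in prenex form. In the direction (b)$\Rightarrow$(a) this is available because $\mathrm{B}\Sigma^0_{n+2}$ implies $\mathrm{B}\Sigma^0_{n+1}$. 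In the direction (a)$\Rightarrow$(b) I would first show that (a) implies $\mathrm{B}\Sigma^0_{n+1}$. I would try to do this by induction on $n$, noting that finite $\Sigma^0_n$-definable sets are finite $\Sigma^0_{n+1}$-definable sets, so (a) for $n$ gives (a) for $n-1$. Alternatively, I would simply read the theorem, as the paper does, as the direct relativization in which such manipulations of the oracle are taken for granted.
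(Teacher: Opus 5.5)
Your proposal is correct and is essentially the paper's own argument. The paper notes that $\overline{\emptyset^{(n)}}$ is $\Pi^0_n$-complete, so $\Sigma^0_{n+1}$-definable sets are $\Sigma^0_1$ in that oracle, and then simply relativizes the first half of the proof of Theorem~\ref{equivBS2} (Equations~\ref{FU}--\ref{FU3}), just as you do with $\mathrm{FIN}^{\overline{\emptyset^{(n)}}}$ in place of $\mathrm{FIN}$.

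Your closing paragraph goes beyond the paper, which takes these weak-induction manipulations for granted. One caution there: the least-unrefuted-candidate construction puts $f(i)$ into $\mathrm{FIN}^{\overline{\emptyset^{(n)}}}$ only if the $\Sigma^0_{n+2}$ statement has a \emph{least} witness. That needs the least number principle for $\Pi^0_{n+1}$ formulas, not just $\mathrm{B}\Sigma^0_{n+1}$. Applying (a) directly to the $\Sigma^0_{n+1}$ sets $\{b \mid \forall b'<b\,\neg\varphi(i,b')\}$, with $\varphi$ the $\Pi^0_{n+1}$ part of the hypothesis, avoids this and needs only $\mathrm{B}\Sigma^0_{n+1}$.
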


We also want to relativize the second half of Theorem~\ref{equivBS2}. 

\begin{theorem}\label{useBn}
	Consider these statements in $\mathcal{N}$:
	\begin{enumerate}[label=(\alph*)]
		\item Every finite union of finite $\Sigma^0_{n+1}$-definable sets is finite.
		\item The Infinite $\Sigma^0_{n+1}$-Pigeonhole Principle holds. 
	\end{enumerate}
	(a) implies (b) and, if in addition, $\mathcal{N}$ realizes $\mathrm{B}\Sigma^0_{n}$, then (b) implies (a). 
\end{theorem}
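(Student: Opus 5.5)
We will relativize the second half of the proof of Theorem~\ref{equivBS2}, treating $\Sigma^0_{n+1}$-definable sets as sets c.e.\ relative to the $\Pi^0_n$-complete parameter $\overline{\emptyset^{(n)}}$. The step needing care is the converse direction, because a $\Sigma^0_{n+1}$ set does not come with a $\Delta^0_0$ enumeration. To build a single coloring from finitely many such sets, we must pick, for each $x$, the least index $e$ whose set enumerates $x$ earliest. That choice carries a bounded quantifier in front of a $\Sigma^0_{n+1}$ matrix, and moving it inside requires $\mathrm{B}\Sigma^0_n$.

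For (a) implies (b), we argue contrapositively. Let $C$ be a $\Sigma^0_{n+1}$-definable partial coloring into $k$, and suppose every color class $\{x \mid C(x)=i\}$ for $i<k$ is finite. Each such class is $\Sigma^0_{n+1}$-definable uniformly in $i$, since its definition is $\exists y\,[(x,y)\in G \wedge y = i]$ with $G$ the $\Sigma^0_{n+1}$ graph. By (a), their union, which is the domain of $C$, is finite. Hence no infinite $X$ lies inside the domain, and the Infinite $\Sigma^0_{n+1}$-Pigeonhole Principle holds vacuously for $C$. If $X$ is an infinite subset of the domain, then some class meets $X$ infinitely, and this is exactly what (b) asserts. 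This direction needs no bounding.

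For (b) implies (a), assume $\mathrm{B}\Sigma^0_n$ and fix a uniform family $\{A_e \mid e<m\}$ of finite $\Sigma^0_{n+1}$ sets, with $x\in A_e \iff \exists s\,\psi(e,x,s)$ and $\psi$ a $\Pi^0_n$ formula. We mimic the coloring from Theorem~\ref{equivBS2}: set $C(x)=e$ iff there is an $s$ such that
\begin{itemize}
\item $\psi(e,x,s)$ holds,
\item for all $t<s$ and all $i<m$, $\neg\psi(i,x,t)$, and
\item for all $i<e$, $\neg\psi(i,x,s)$.
\end{itemize}
The graph of $C$ is single-valued, its domain is $\bigcup_{e<m}A_e$, and each color class is contained in $A_e$, hence finite. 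Applying (b) contrapositively then gives that the domain of $C$, namely $\bigcup_e A_e$, is finite.

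The main obstacle is verifying that $C$ is $\Sigma^0_{n+1}$-definable. The clause $\forall t<s\,\forall i<m\,\neg\psi$ is a bounded universal quantifier in front of a $\Sigma^0_n$ formula. Using $\mathrm{B}\Sigma^0_n$, via the prenex normal form fact recorded in the Bounding subsection, this clause is equivalent to a $\Sigma^0_n$ formula. The whole definition is then $\exists s$ applied to a conjunction of $\Pi^0_n$ and $\Sigma^0_n$ formulas, which is $\Sigma^0_{n+1}$. For $n=0$ this collapses to the $\Delta^0_0$ case already handled in Theorem~\ref{equivBS2}. I would also check that taking the least $s$, and then the least $e$, gives a unique color without any further bounding. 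Since the minimizations are expressed only by the bounded negations above, single-valuedness follows directly from the definition.
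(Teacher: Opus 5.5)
Your argument is the paper's proof: the same least-stage/least-index coloring, the same use of $\mathrm{B}\Sigma^0_n$ to push the bounded quantifiers inward, and the same contrapositive application of (b) to the domain of $C$. However, the step ``its domain is $\bigcup_{e<m}A_e$'', which the paper also passes over, is not justified by your hypotheses. For $x\in A_e$ you need a \emph{least} $s$ with $\exists i<m\,\psi(i,x,s)$, and then a least $e$. That is the least-number principle for a $\Pi^0_n$ formula, which is equivalent to $\mathrm{I}\Sigma^0_n$. But $\mathrm{B}\Sigma^0_n$ yields only $\mathrm{I}\Sigma^0_{n-1}$. (The Kirby--Paris chain is $\mathrm{I}\Sigma^0_n\Rightarrow\mathrm{B}\Sigma^0_n\Rightarrow\mathrm{I}\Sigma^0_{n-1}$, both strict; the introduction shifts the index on $\mathrm{B}$.) Your closing check establishes uniqueness of the color, not existence of the stage. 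Concretely, in a model of $\mathrm{B}\Sigma^0_n+\neg\mathrm{I}\Sigma^0_n$, take a $\Pi^0_n$ formula $\Psi(s)$ that has a witness but no least one, and let $m=1$ and $\psi(0,x,s):\equiv x=0\wedge\Psi(s)$. Then $A_0=\{0\}$ while $\mathrm{dom}(C)=\emptyset$. So identifying $\mathrm{dom}(C)$ with the union must come from (b), not from $\mathrm{B}\Sigma^0_n$.

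The repair is to show first that (b) together with $\mathrm{B}\Sigma^0_n$ implies $\mathrm{I}\Sigma^0_n$. Suppose $\mathrm{I}\Sigma^0_n$ fails. Using $\mathrm{B}\Sigma^0_n$, you get a proper cut $I=\{x\mid\exists y\,\chi(x,y)\}$ with $\chi\in\Pi^0_{n-1}$ and some $a\notin I$. Let $\rho(x,z):\equiv\forall x'\le x\,\exists y\le z\,\chi(x',y)$, which is $\Pi^0_{n-1}$ under $\mathrm{B}\Sigma^0_{n-1}$. Color each $z$ by the least $x\le a$ with $\neg\rho(x,z)$; it exists because $\neg\rho(a,z)$ holds and $\mathrm{I}\Sigma^0_{n-1}$ is available. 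This is a total $\Delta^0_n$ coloring into $a+1$ colors. For $x\in I$, $\mathrm{B}\Sigma^0_n$ gives $z_0$ with $\rho(x,z_0)$, so the class of $x$ lies below $z_0$. For $x\notin I$ the class is empty, since $C(z)=x$ would make $I=\{x'\mid\rho(x',z)\}$ a $\Pi^0_{n-1}$-definable proper cut. Every class is therefore finite, contradicting (b) with $X=\mathbb{N}$. Once $\mathrm{I}\Sigma^0_n$, and hence the $\Pi^0_n$ least-number principle, is available, every $x\in\bigcup_eA_e$ has a least stage and your argument goes through as written. A smaller point: in (a)$\Rightarrow$(b), apply (a) to the sets $C^{-1}(i)\cap X$ for $i<k$. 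Applied to the full classes, it only shows that some class is infinite, not that it meets $X$ infinitely.
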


The (a) implies (b) direction follows almost immediately.  Since likely we are looking for the first $n$ where $\mathrm{B}\Sigma^0_{n}$ fails in $\mathcal{N}$, this extra hypothesis can be tolerated. 

\begin{proof}
	(b) implies (a) follows: Like in the proof of Theorem~\ref{equivBS2}, it is enough to show that a finite union of finite $\Sigma^0_{n+1}$-definable sets induces a $\Sigma^0_{n+1}$ partial coloring where each color is used at most finitely often. Let $\{ x \mid \exists s \gamma_i(x,s) \}$, for $i < k$, be our $\Sigma^0_{n+1}$-definable finite sets, where $\gamma_i(x,s)$ is $\Pi^0_n$. $C(x)=i$ iff there is an $s$ such that $i$ is the least $e<k$ such that $\gamma_e(x,s)$ and, for all $t<s$ and all $i < k$, $\neg \gamma_i(x,t)$. We need $\mathrm{B}\Sigma^0_n$ to show this is equivalent to a $\Sigma^0_{n+1}$ formula in $\mathcal{N}$ (we need to move the bounded quantifiers in the last clause to the interior of the formula). 
\end{proof}

\subsection{$\mathrm{RT}^n_2$} \label{last}\label{RTN}

In this subsection, we assume that $\mathcal{N}$ realizes $\mathrm{RT}^{n+1}_2$. By Jockusch \cite{jockusch:72} and others, over $\mathrm{RCA}_0$, $\mathrm{RT}^{n+1}_2$ implies Arithmetic Comprehension ($\mathrm{ACA}_0$) when $n\geq 2$. This also holds over our fragment $F$. So $\mathcal{N}$ realizes arithmetic comprehension. Every $\Sigma^0_m$-definable set is in $\mathcal{P}$.   

To get arithmetic induction, it is enough to show every arithmetic set $X$ has a least element. By arithmetic comprehension, $X\in \mathcal{P}$. Applying $\mathrm{I}\Delta^0_0$ to the formula $x \not\in X$ shows that $X$ has a least element. Hence, $\mathcal{N}$ also realizes arithmetic bounding. 

We can also use this parameter trick to show that the Infinite Arithmetic Pigeonhole Principle holds in $\mathcal{N}$. Take our arithmetic coloring $C$. $C \in \mathcal{P}$. $C$ is now a $\Delta^0_0$ coloring (with a parameter for $C$), and Theorem~\ref{wr1} applies. 

There is also a Weihrauch reduction of the Infinite $\Sigma^0_{n-1}$-Pigeonhole Principle from $\mathrm{RT}^{n+1}_2$ without using this parameter trick. But for this, we have to alter the instances of our problem to include not only the partial $\Sigma^0_{n-1}$ coloring but also an $\mathcal{N}$-infinite subset $X$ of its domain. Only for $n=2$ can we trivially get hold of this set $X$. We also need that arithmetic bounding holds in $\mathcal{N}$. 

\begin{theorem}\label{lastlemma}
	The Infinite $\Sigma^0_n$-Pigeonhole Principle is (strongly) Weihrauch reducible over $\mathcal{N}$ to $\mathrm{RT}^{n+1}_2$ over $\mathcal{N}$. 
\end{theorem}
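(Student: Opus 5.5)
Given an instance of the Infinite $\Sigma^0_n$-Pigeonhole Principle, namely a $\Sigma^0_n$ partial coloring $C_0$ into $k$ colors together with an $\mathcal{N}$-infinite set $X\in\mathcal{P}$ contained in its domain, I would build a computable coloring $\Phi(C_0,X)$ of $[X]^{n+1}$ whose homogeneous sets are forced to be monochromatic for $C_0$. The approach is to approximate $C_0$ by iterated limits and code the approximation into the extra $n-1$ coordinates. Write $C_0(x)=i$ iff $\exists s\,\gamma(x,i,s)$ with $\gamma\in\Pi^0_{n-1}$. Using the Shoenfield limit lemma $n-1$ times, every $\Sigma^0_n$ (indeed $\Delta^0_n$) predicate is an $(n-1)$-fold iterated limit of a computable function. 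The plan is therefore to define a computable $g(x,s_1,\dots,s_{n-1})<k$ with
\[
C_0(x)=\lim_{s_1}\lim_{s_2}\cdots\lim_{s_{n-1}} g(x,s_1,\dots,s_{n-1})
\]
for all $x\in X$. Since $x$ lies in the domain, the search for the least witness pair $(i,s)$ converges at the top level. Arithmetic bounding, which holds in $\mathcal{N}$ as established just above, is needed at this point. First, it is used to put the formula for ``the least $(i,s)$ with $\gamma(x,i,s)$'' into prenex normal form, moving the bounded quantifiers inside. Second, it guarantees that each successive limit actually exists in $\mathcal{N}$, because each level quantifies over finitely many earlier candidates, and bounding gives a single stage beyond which all of them have stabilized.

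Next, define $\Phi(C_0,X)(x,y,s_1,\dots,s_{n-1})=1$ for $x<y<s_1<\cdots<s_{n-1}$ in $X$ iff $g(x,s_1,\dots,s_{n-1})=g(y,s_1,\dots,s_{n-1})$, and $0$ otherwise. This mirrors the proof of Theorem~\ref{wr1}, where the last $n-1$ coordinates act as stages. Let $H_1\subseteq X$ be infinite and homogeneous. I would then show that $H_1$ cannot have color $0$. Take $k+1$ elements $x_0<\dots<x_k$ of $H_1$. Taking iterated limits along $H_1$, which is possible since $H_1$ is infinite and the limits exist, color $0$ would force $C_0(x_0),\dots,C_0(x_k)$ to be pairwise distinct, contradicting the finite pigeonhole principle for $k$ colors. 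To pass the homogeneity through the iterated limits, one chooses $s_1,\dots,s_{n-1}$ successively from $H_1$ so large that each $g(x_j,\dots)$ has already reached its limit value at that level. There are only finitely many ($k+1$) elements $x_j$, so bounding yields uniform stages.

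With color $1$, the same limit argument gives $C_0(x)=C_0(y)$ for all $x,y\in H_1$. Hence $\Psi(H_1)=H_1$ is an infinite subset of a single color class, and the reduction is strong. If instead the color index is wanted, we output $C_0(h)$ for some $h\in H_1$; this is not computable from $C_0$ alone. The main obstacle I expect is the careful formalization of the iterated-limit approximation inside $\mathcal{N}$. One must check that ``homogeneous of color $1$'' genuinely transfers through $n-1$ nested limits. Choosing the stages from $H_1$ rather than arbitrarily requires arguing level by level, from the innermost limit outward, that stabilization points can be found in $H_1$. This is exactly where arithmetic bounding and arithmetic induction (available via $\mathrm{ACA}_0$ from $\mathrm{RT}^{n+1}_2$) are used.
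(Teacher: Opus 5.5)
Your argument is correct, and its skeleton is the paper's. It uses the ``same color'' coloring of Theorem~\ref{wr1}, with the extra $n-1$ coordinates serving as approximation parameters chosen successively from $H_1$ above the finitely many points in play. Arithmetic bounding makes the thresholds uniform, the finite pigeonhole principle on $k+1$ points rules out color $0$, and $\Psi(H_1)=H_1$.

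Where you differ is the approximation.
\begin{itemize}
\item The paper puts the graph in prenex form and lets $c_{x,\vec z}$ be the least color for which the formula holds with \emph{every} quantifier bounded by the matching $z_l$. Its Claim is then a direct witness-bounding induction, and no limit lemma has to be formalized inside $\mathcal{N}$.
\item Because the paper spends one coordinate on each quantifier, including the leading existential, its proof as written starts from a $\Sigma^0_{n-1}$ graph with an $(n-1)$-tuple $\vec z$ (cf.\ the sentence preceding the theorem).
\item Your search for the least witness pair $\langle i,s\rangle$ terminates because $x$ lies in the domain. This lets the outermost stage absorb that existential, so $n-1$ stages handle a $\Sigma^0_n$ graph. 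That matches the statement as written, and at $n=1$ it reduces exactly to Theorem~\ref{wr1}.
\end{itemize}
The price is your appeal to an internal iterated limit lemma. You can avoid it by an explicit choice for $n\ge 2$: let $g(x,s_1,\dots,s_{n-1})$ be the color component of the least $\langle i,s\rangle<s_1$ such that $\gamma(x,i,s)$ holds with its $j$-th quantifier bounded by $s_j$ (and a default value if there is none). The limits then exist by monotonicity in the innermost bound plus arithmetic bounding. Your successive-choice argument goes through unchanged.

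Two small slips. First, not every $\Sigma^0_n$ predicate is an $(n-1)$-fold iterated limit of a computable function; only $\Delta^0_n$ ones are. What you actually use is that $C_0$ restricted to its domain is $\Delta^0_n$. Second, the stages are chosen outermost-first ($s_1$ first), whereas the existence of the limits is established innermost-first. Your final paragraph runs these two orders together.
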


\begin{proof}
First we have to determine our functional $\Phi$. Let $C_0$ be a $\Sigma^0_{n-1}$ $k$-coloring such that $X$ is an $\mathcal{N}$-infinite subset of the domain of the coloring. Let $\exists w_0 \forall w_1 \dots \theta(x,c,\vec{w})$ be the $\Sigma^0_{n-1}$ graph of our coloring $C_0(x)=c$ (recall $\theta$ is always $\Delta^0_0$). Since we are working in $\mathcal{N}$, we can assume that our $\Sigma^0_{n-1}$ graph is in Prenex Normal Form. Given $x \in X$ and an $(n-1)$-tuple $\vec{z}$, let $c_{x,\vec{z}}$ be the least $c<k$ such that $\exists w_0 < z_0 \forall w_1 < z_1 \dots \theta(x,c,\vec{w})$ and, for all $c'< c$, $\forall w_0 < z_0 \exists w_1 < z_1 \dots \neg\theta(x,c',\vec{w})$. This is well-defined since $C_0(x)$ exists. Let $\Phi(C_0)(x,y,\vec{z}) = 1$ iff $c_{x,\vec{z}} = c_{y,\vec{z}}$, and $0$ otherwise, where $x,y,\vec{z} \in X$. This is a coloring of $[X]^{n+1}$. 

\begin{claim}
	Let $H_1 \subseteq X$ be homogeneous for $\Phi(C_0)$ and $D\subset H_1$ be finite. Then there is a $\vec{z} \in H_1$ such that for all $x\in D$, $C_0(x) = c_{x,\vec{z}}$. 
\end{claim}
   
\begin{proof}
	It is enough to show by induction that there is an $l$-tuple $\vec{z} \in H_1$ such that when $l$ is even, $\exists w_0 < z_0 \forall w_1 < z_1 \dots \forall w_{l-1} < z_{l-1} \exists w_{l} \forall w_{l+1} \dots \theta(x,C_0(x),\vec{w})$ and, for all $c'< C_0(x)$, $\forall w_0 < z_0 \exists w_1 < z_1 \dots \exists w_{l-1} < z_{l-1} \forall w_{l} \exists w_{l+1}\dots \neg\theta(x,c',\vec{w})$, and similarly for when $l$ is odd. To find $z_{l+1}$, we only need to worry about the existential quantifier. For each $w_0 < z_0, w_1 < z_1, \dots, w_l < z_l$, we have to find a bound for $w_{l+1}$. There are at most $z_0 z_1 \dots z_l$ such $w_{l+1}$. Since arithmetic bounding holds in $\mathcal{N}$, a bound for all these $w_{l+1}$ exists. 
\end{proof}

	Since the size of $D$ can be larger than the number of colors, it must be the case that $H_1$ has color $1$ and, for all $x,y \in H_1$, $C_0(x)=C_0(y)$. Again, if we want the color of the hole, the coloring $C_0$ is needed. 
\end{proof}

\section{Conclusion}

Perhaps we should always be asking when we can extract a Weihrauch reduction or similar reduction from a proof in $\mathcal{N}$ comparing two problems. There has been some recent work in this direction by Jeff Hirst and his students; see Davis, Hirst, Keohulian, Miller, and Ross \cite{MR5000368}. 

\bibliographystyle{plain}
\bibliography{BSigma2.bib}
\end{document}